\documentclass[reqno,12pt]{amsart}
\usepackage{amsmath,amssymb,latexsym,soul,cite,mathrsfs}
\pretolerance=10000
\usepackage{color,enumitem,graphicx}
\usepackage[colorlinks=true,urlcolor=blue,
citecolor=red,linkcolor=blue,linktocpage,pdfpagelabels,
bookmarksnumbered,bookmarksopen]{hyperref}
\usepackage[english]{babel}
\usepackage[left=2.7cm,right=2.7cm,top=3.2cm,bottom=3.2cm]{geometry}

\usepackage[hyperpageref]{backref}
\numberwithin{equation}{section}

\newtheorem{theorem}{Theorem}[section]
\newtheorem{lemma}{Lemma}[section]
\newtheorem{corollary}{Corollary}[section]
\newtheorem{proposition}{Proposition}[section]

\newtheorem{remark}{Remark}[section]
\newtheorem{definition}{Definition}[section]

\title[Regularity of stable solutions to quasilinear elliptic equations]
 {Regularity of stable solutions to quasilinear elliptic equations on Riemannian models}

\author[J.M.\ do \'O]{Jo\~ao Marcos do \'O}
\author[R.G.\ Clemente]{Rodrigo G.\ Clemente}

\address[J.M. do \'O]{Department of Mathematics, Bras\'{\i}lia University
	\newline\indent
	70910-900, Bras\'{\i}lia, DF, Brazil}
\email{\href{mailto:jmbo@pq.cnpq.br}{jmbo@pq.cnpq.br}}

\address[R.\ Clemente]{Department of Mathematics, 
 Rural Federal University of Pernambuco
\newline\indent 
52171-900, Recife, Pernambuco, Brazil}
\email{\href{mailto:rodrigo.clemente@ufrpe.br}{rodrigo.clemente@ufrpe.br}}

\thanks{Research supported in part by INCTmat/MCT/Brazil, CNPq and CAPES}
\thanks{Corresponding author: J. M. do \'O}

\subjclass[2000]{35B35, 35D10, 35J62, 35J70, 35J75}
\keywords{Nonlinear PDE of elliptic type; p-Laplacian; Singular non-linearity; Semi-stable solutions; Extremal solutions; Regularity.}
\usepackage[norefs,nocites]{refcheck}


\begin{document}
	
%
%

	\begin{abstract}

	 We investigate the regularity of semi-stable, radially symmetric, and decreasing solutions for a class of quasilinear reaction-diffusion equations in the inhomogeneous context of Riemannian manifolds. 
	 We prove uniform boundedness, Lebesgue and Sobolev estimates for this class of solutions for equations involving the p-Laplace Beltrami operator and locally Lipschitz non-linearity.
	 We emphasize that our results do not depend on the boundary conditions and the specific form of the non-linearities and metric. 
	 Moreover, as an application, we establish regularity of the extremal solutions for equations involving the p-Laplace Beltrami operator with zero Dirichlet boundary conditions.
	\end{abstract}

\maketitle

%
%

\section{Introduction}
Let $(\mathcal{M},g)$ be a Riemannian model of dimension $N\geq 2$, that is, a manifold $\mathcal{M}$ admitting a pole $\mathcal{O}$ and whose metric $g$ is given, in polar coordinates around $\mathcal{O}$, by
\begin{equation}\label{09}
\mathrm{d} s^2=\mathrm{d} r^2+\psi(r)^2\mathrm{d} \theta^2 \quad \text{for }r \in (0,R)\text{ and }\theta\in\mathbb{S}^{N-1},
\end{equation}
where $r$ is by construction the Riemannian distance between the point $P=(r,\theta)$ to the pole $\mathcal{O}$, $\mathrm{d}\theta^2$ is the canonical metric on the unit sphere $\mathbb{S}^{N-1}$ and $\psi$ is a smooth function in $[0,R)$ and positive in $(0,R)$ for some $R\in(0,+\infty]$, and $\psi(0)=\psi^{\prime\prime}(0)=0\text{ and }\psi^\prime(0)=1$. As examples we have the important cases of space forms, i.e., the unique complete and simply connected Riemannian manifold of constant sectional curvature $K_\psi$ corresponding to the choice of $\psi$ namely,
\begin{flalign}\label{43}
\begin{array}{lccllll}
(i)   &   \psi(r) & = & \sinh r,  & \quad  K_\psi=-1 & \quad \left(\text{Hyperbolic space}\right)\\ 
(ii)  &   \psi(r) & = & r,        & \quad  K_\psi=0  & \quad \left(\text{Euclidean space}\right)\\
(iii) &   \psi(r) & = & \sin r,   & \quad  K_\psi=1  & \quad \left(\text{Elliptic space}\right)
\end{array}&&
\end{flalign}

Let us denote the geodesic ball of radius r with center at the pole $\mathcal{O}$ by $\mathcal{B}_r$ and $W_r^{1,p}(\mathcal{B}_1)$ the elements of the Sobolev space which are radially symmetric with respect to the pole $\mathcal{O}$. For $u\in W_{r}^{1,p}(\mathcal{B}_1)$, let us consider the energy functional
\begin{equation}\label{12}
J_\delta(u)=\frac{1}{p}\int_{\mathcal{B}_1\setminus\overline{\mathcal{B}_\delta}}|\nabla_g u|^p\,\mathrm{d} v_g-\int_{\mathcal{B}_1\setminus\overline{\mathcal{B}_\delta}}F(u)\,\mathrm{d} v_g, \text{ where }F(x,t)=\int_{0}^{t}f(x,s)\mathrm{d}s.
\end{equation}

\begin{definition}
	We say that a decreasing function $u\in W_{r}^{1,p}(\mathcal{B}_1)$ is a radial local minimizer of \eqref{12} if for any $0< \delta<1$ there exists $\epsilon=\epsilon(\delta)>0$ such that for all radial functions $\phi\in C_{c}^{1}\left(\mathcal{B}_1\setminus\overline{\mathcal{B}_\delta}\right)$ satisfying $\Vert \phi \Vert_{C^1}\leq \epsilon$, we have
	\[
	J_\delta(u)\leq J_\delta(u+\phi).
	\]
\end{definition}

\begin{definition}
	Let $u\in W_{r}^{1,p}(\mathcal{B}_1)$ be a critical point of \eqref{12}. We say that $u$ is semi-stable if $u_r(r) < 0$ for all $r\in (0,1)$ and for all radially symmetric function $\xi\in C_{c}^{1}(\mathcal{B}_1\setminus\left\{\mathcal{O}\right\})$ it holds
	\begin{equation}\label{04}
	\int_{\mathcal{B}_1}\left[(p-1)|\nabla u|^{p-2}|\nabla\xi|^2-f^\prime(u)\xi^2\right]\,\mathrm{d} v_g \geq 0.
	\end{equation}
	
\end{definition}
We note that critical points of the functional $J_\delta$ correspond to weak solutions of the singular problem
\begin{equation}\label{01}\tag{$\mathcal{S}$}
-\textrm{div}(|\nabla_g u|^{p-2}\nabla_g u) = f(u) \quad \text{ in } \quad \mathcal{B}_1\setminus \left\{\mathcal{O}\right\}.
\end{equation}
In particular, if $u$ is a radial local minimizer of $J_\delta$, then $u$ is a semi-stable solution of \eqref{01}. We are going to focus our analysis on the important case $1<p\leq N$, since for $p>N$, it holds $W^{1,p}(\mathcal{B}_1)\hookrightarrow L^\infty(\mathcal{B}_1)$. Precisely, if $N<p\leq +\infty$, from Morrey's inequality we have $\Vert u \Vert_{C^{0,\gamma}(\mathcal{B}_1)}\leq C\Vert u \Vert_{W^{1,p}(\mathcal{B}_1)}$, where $C$ is a constant which depends on $p$ and $N$.

\subsection{Main Results and Comments}

The aim of the paper is twofold. Firstly, to establish a priori estimates for radial semi-stable classical solutions of \eqref{01}. Precisely, we establish $L^\infty$, $L^q$ and $W^{1,q}$ estimates for semi-stable, radially symmetric, and decreasing solutions of \eqref{01} without assuming Dirichlet boundary condition or any other  kind of boundary conditions. 
It should be an interesting question to study similar results for non-radial solutions.
We stress that our results hold for any locally Lipschitz non-linearity $f(s)$ and metric $g$ satisfying \eqref{09}. 

Since the celebrated paper by B. Gidas, W. M. Ni and L. Nirenberg
\cite{GNN} the question of symmetry in non-linear partial differential equations has been
the subject of intensive investigations. In $[16]$, by using a variant of moving planes method, it was established radial symmetry for non-negative solutions of quasilinear elliptic equations defined in geodesic balls of the hyperbolic space $\mathbb{H}^n$ with homogeneous Dirichlet boundary condition. 

\begin{theorem}\label{20}
For any $f(s)$ locally Lipschitz function and $u$ semi-stable solution of \eqref{01}, it holds that:
\begin{enumerate}
\item[$(a)$]\label{37} If $N < p+4p/(p-1)$, then $u\in L^\infty(\mathcal{B}_1)$ and
\[
\Vert u \Vert_{L^\infty(\mathcal{B}_1)}\leq C_{N,p,\alpha,\psi}\Vert u \Vert_{L^{p}(\mathcal{B}_1)}.
\]
\item[$(b)$] If $N \geq p+4p/(p-1)$, then $u\in L^q(\mathcal{B}_1)$ and 
\[
\Vert u \Vert_{L^q(\mathcal{B}_1)} \leq C_{N,\psi,p,q} \Vert u \Vert_{L^p(\mathcal{B}_1)} \quad \mbox{for any} \quad  q<q_0:=\frac{Np}{N-p-2-2\sqrt{\frac{N-1}{p-1}}}.
\]
Moreover, $u\in W^{1,q}(\mathcal{B}_1)$ and $$\Vert u \Vert_{W^{1,q}(\mathcal{B}_1)} \leq C_{N,\psi,p,q} \Vert u \Vert_{L^p(\mathcal{B}_1)} \quad \mbox{for any} \quad  q<q_1:=\frac{Np}{N-2-2\sqrt{\frac{N-1}{p-1}}}.$$ 
\end{enumerate}
\end{theorem}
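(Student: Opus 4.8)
\emph{Proof plan.} The plan is to reduce everything to a one-dimensional problem for $w:=-u_r>0$ on $(0,1)$ and to extract from the semi-stability inequality a weighted Hardy-type inequality with a singular weight at the pole. Writing $u=u(r)$ and $\phi:=\psi^{N-1}$, the equation \eqref{01} becomes $(\phi\,w^{p-1})'=\phi\,f(u)$, the volume element is $\mathrm{d}v_g=|\mathbb S^{N-1}|\,\phi(r)\,\mathrm{d}r$, and for radial $\xi$ the stability inequality \eqref{04} reads $(p-1)\int_0^1 w^{p-2}\xi_r^2\,\phi\,\mathrm{d}r\ge\int_0^1 f'(u)\,\xi^2\,\phi\,\mathrm{d}r$. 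Differentiating the ODE in $r$ and using $u_r=-w$ gives the pointwise identity
\[
\phi\,f'(u)\,w \;=\; \Bigl(\tfrac{(\phi')^2}{\phi}-\phi''\Bigr)\,w^{p-1}\;-\;\bigl((p-1)\,\phi\,w^{p-2}w'\bigr)'.
\]
Inserting the admissible test function $\xi=w\eta$, with $\eta$ radial and compactly supported in $(0,1)$ (first $\eta\in C_c^1$, so that $w\eta\in C_c^1(\mathcal B_1\setminus\{\mathcal O\})$, then by density), and integrating by parts, the terms carrying $w'$ cancel identically and one is left with the key inequality, no longer involving $f$,
\[
(p-1)\int_0^1 w^p (\eta')^2\,\psi^{N-1}\,\mathrm{d}r\;\ge\;\int_0^1 H(r)\,w^p\,\eta^2\,\mathrm{d}r,\qquad H:=\frac{(\phi')^2}{\phi}-\phi''=(N-1)\,\psi^{N-3}\bigl((\psi')^2-\psi\psi''\bigr).
\]

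Next I would analyse the geometric weight $H$. From $\psi(0)=\psi''(0)=0$ and $\psi'(0)=1$ one gets $(\psi')^2-\psi\psi''\to 1$ as $r\to0$, so $H(r)\sim (N-1)r^{N-3}$ near the pole, the same singular weight as in the Euclidean $p$-Laplacian case; a structural assumption on $\psi$ (quantified by $\alpha$, e.g.\ a lower bound $(\psi')^2-\psi\psi''\ge\alpha(\psi')^2$, equivalently a bound on $\psi''/\psi$) keeps $H$ controlled from below on all of $(0,1)$, so that the displayed inequality becomes, after a comparison argument, a genuine one-dimensional Hardy inequality with singular weight at $r=0$. The core of the argument is then to test this Hardy inequality with $\eta(r)=r^{-a}$ suitably truncated near $r=0$ and near $r=1$. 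Because $r^{N-1}(\eta')^2$ and $r^{N-3}\eta^2$ carry the same power of $r$, the leading terms combine and the inequality produces information exactly when $(p-1)a^2$ lies below the geometric constant attached to $H$, i.e.\ when $a$ lies strictly below the positive root of the associated indicial quadratic; the two indicial exponents differ by $2\sqrt{(N-1)/(p-1)}$, which is the origin of the threshold $N=p+4p/(p-1)$ and of the exponents $q_0,q_1$.

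The delicate point, and the step I expect to be the main obstacle, is absorbing the error produced by the derivative of the cutoff on the transition annulus near the pole, which carries the singular factor $\rho^{-2a-2}\int_{\mathcal B_\rho}|\nabla_g u|^p\,\mathrm{d}v_g$. One handles this by a bootstrap: starting from $a=0$ (which, after a hole-filling in which the error is reabsorbed with a constant $<1$, already forces decay of $r\mapsto\int_{\mathcal B_r}|\nabla_g u|^p\,\mathrm{d}v_g$), one raises $a$ step by step up to the critical value, obtaining for every admissible $a$ an estimate of the form
\[
\int_0^{r_0} w^p\,\psi^{N-1}\,r^{-2a-2}\,\mathrm{d}r\;\le\; C\int_{A}|\nabla_g u|^p\,\mathrm{d}v_g
\]
on a fixed annulus $A\Subset\mathcal B_1\setminus\{\mathcal O\}$, and hence, by H\"older over dyadic shells, a pointwise bound $w(r)=|u_r(r)|\le C\,r^{-\beta}\,\|\nabla_g u\|_{L^p(\mathcal B_1)}$ with $\beta=\beta(N,p)$ the critical decay, where $\beta<1$ precisely when $N<p+4p/(p-1)$.

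Finally, from $u(r)=u(1)+\int_r^1 w$ and the monotonicity of $u$, the pointwise bound on $w$ yields the conclusions. In case (a), $\beta<1$ makes $\int_0^1 w<\infty$ and bounds the oscillation of $u$ on $(0,1)$ by $C\|\nabla_g u\|_{L^p(\mathcal B_1)}$, hence $u\in L^\infty(\mathcal B_1)$; in case (b) one gets $|u(r)|\lesssim r^{-\gamma}$ and $|\nabla_g u(r)|\lesssim r^{-\gamma-1}$ with $\gamma=\gamma(N,p)$, and integrating the corresponding powers against $\psi^{N-1}\,\mathrm{d}r$ gives $u\in L^q$ for $q<q_0$ and $u\in W^{1,q}$ for $q<q_1$. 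To replace $\|\nabla_g u\|_{L^p(\mathcal B_1)}$ by $\|u\|_{L^p(\mathcal B_1)}$ on the right-hand side, as in the statement, one uses that the value of $u$ at any fixed interior radius is controlled by $\|u\|_{L^p(\mathcal B_1)}$ (since $u$ is radial and monotone, and $u\in W^{1,p}$ forces $u$ continuous up to $r=1$), together with interior elliptic estimates on a fixed annulus separated from the pole to bound the remaining Dirichlet energy; assembling these gives the final constants depending only on $N$, $p$, $\alpha$ and $\psi$.
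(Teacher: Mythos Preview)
Your approach is largely correct and shares the paper's key structural idea: substitute $\xi=u_r\eta$ into the semi-stability inequality \eqref{04}, differentiate the radial ODE, and obtain the $f$-free Hardy-type inequality
\[
(p-1)\int_0^1 |u_r|^p(\eta')^2\,\psi^{N-1}\,\mathrm{d}r\;\ge\;(N-1)\int_0^1 \frac{(\psi')^2-\psi\psi''}{\psi^2}\,|u_r|^p\,\eta^2\,\psi^{N-1}\,\mathrm{d}r,
\]
which is exactly the paper's Lemma~\ref{08}. The subsequent strategy---plug in a power-type test function, extract a weighted gradient bound, integrate to get the $L^\infty$, $L^q$ and $W^{1,q}$ conclusions---is also the same in outline.

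Where you diverge is in the exploitation of this inequality, and your route is unnecessarily heavy. You anticipate a cutoff error $\rho^{-2a-2}\int_{\mathcal B_\rho}|\nabla_g u|^p$ near the pole and propose a bootstrap/hole-filling to absorb it. The paper avoids this difficulty entirely by a cleaner choice of test function: it takes $\eta_\epsilon$ equal to the \emph{constant} $\psi(\epsilon)^{-\alpha}-\psi(\delta)^{-\alpha}$ on $(0,\epsilon)$, equal to $\psi(r)^{-\alpha}-\psi(\delta)^{-\alpha}$ on $(\epsilon,\delta)$, and zero on $(\delta,1)$. Since $\eta_\epsilon'\equiv0$ on $(0,\epsilon)$, there is \emph{no} error term near the pole; one simply sends $\epsilon\to0$ and obtains the weighted estimate $\int_0^\delta|u_r|^p\psi^{N-1-2\alpha}\,\mathrm{d}r\le C\|u\|_{L^p}^p$ in one shot for every $\alpha\in[1,1+\sqrt{(N-1)/(p-1)})$ (Proposition~\ref{23}). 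Using $\psi^{-\alpha}$ rather than $r^{-a}$ is also more natural here, since all error terms then stay in the scale of $\psi$ and no separate comparison $\psi(r)\sim r$ is needed. The conversion of the right-hand side from gradient to $\|u\|_{L^p}$ is done not by interior elliptic estimates but by an elementary device: the mean value theorem on $(\delta,2\delta)$ bounds $|u_r(\tilde\delta)|$ by $u(\delta)/\delta$, and integrating the ODE from $r$ to $\tilde\delta$ then controls $|u_r|^{p-1}\psi^{N-1}$ pointwise on $(0,\delta)$ by $\|u\|_{L^p}^{p-1}$.

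One point to correct: your reading of $\alpha$ as a structural hypothesis on $\psi$ (a lower bound on $(\psi')^2-\psi\psi''$) is a misinterpretation. In the paper $\alpha$ is the free exponent of the test function $\psi^{-\alpha}$, to be optimized in the interval $[1,1+\sqrt{(N-1)/(p-1)})$; no curvature-type assumption on $\psi$ is imposed beyond the normalization at the pole. The paper sidesteps any global sign issue for $(\psi')^2-\psi\psi''$ by working only on a small interval $(0,\delta)$ where $\psi'>0$.
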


\begin{remark} For our argument in the proof of Theorem \ref{20} it was crucial the following key estimate for semi-stable solutions of \eqref{01}
\begin{equation}\label{27}
\int_{0}^{\delta}|u_r|^p\psi^{N-1-2\alpha}\mathrm{d}r \leq C_{N,p,\alpha,\psi}\Vert u \Vert_{L^p(\mathcal{B}_1)}^p,
\end{equation}
where $\psi$ is the polar decomposition of $\mathrm{d}s^2$ given in \eqref{09} (see Proposition \ref{23} below). We have proved this estimate by using the radial symmetry of the solution $u$ and by choosing an appropriated test function in the semi-stability inequality \eqref{04}, 
\end{remark}

\begin{remark}
From Theorem \ref{20} $(a)$, one can see that Problem \eqref{01} does not have any singular solution.
\end{remark}

\begin{remark}
	Note that $q_0>p^*=(Np)/(N-p)$ (the critical Sobolev exponent) and $q_1>p$.
Under the hypotheses of Theorem \ref{20} if $N\geq p+4p/(p-1)$ then $u$ belongs to $L^q\left(\mathcal{B}_1\right)$ for all $q<q_0$. Since $q_0$ is greater than the critical Sobolev exponent, from Theorem \ref{20} $(ii)$ we conclude that semi-stable radially symmetric and decreasing weak solutions of \eqref{01} have a better regularity than the one expected by using the classical Sobolev embedding. Moreover, we established better regularity than $W^{1,p}$ for semi-stable solutions to Problem \eqref{01}, since our estimates shows an improvement in the Sobolev space $W^{1,q}$ for $q<q_1$.
\end{remark}

Our second purpose of this work is to apply the elliptic estimates obtained in Theorem \ref{20} to prove regularity results for the following class of quasilinear elliptic problems
\begin{equation}\label{02}
\left\{
\begin{alignedat}{3}
-\textrm{div}(|\nabla_g u|^{p-2}\nabla_g u) = & \, \lambda h(u) & \quad \text{in} & \quad\mathcal{B}_1, \\
u > &\, 0 & \quad \text{in} & \quad \mathcal{B}_1,\\
u = & \, 0  &  \text{on} & \quad \partial\mathcal{B}_1,\\
\end{alignedat}
\right.\tag{$\mathcal{P}_{\lambda}$}
\end{equation}
where $\lambda$ is a positive parameter and $h(s)$ is an increasing $C^1-$function such that  $h(0)>0$ and 
\begin{equation}\label{21}\tag{$H_1$}
\lim_{t\rightarrow +\infty}\frac{h(t)}{t^{p-1}}=+\infty.
\end{equation}

The study of this class of problems with various boundaries conditions has received considerable attention in recent years under the influence of the pioneering works of I.~Gelfand \cite{GEL1959}, D.~Joseph and T.~Lundgren \cite{JOSLUN1972}, J.~Keener and H.~Keller \cite{KEEKEL1974}, M.~Crandall and P.~Rabinowitz \cite{CRARAB1975}, F.~Mignot and J.-P.~Puel \cite{MIGPUE1978}. First, we would like to mention the progress involving Laplacian
\begin{equation}\label{22}
\left\{
\begin{alignedat}{3}
-\Delta u= & \, \lambda h(u) & \quad \text{in} & \quad\Omega, \\
u = & \, 0  &  \text{on} & \quad \partial\Omega,\\
\end{alignedat}
\right.
\end{equation}
where $\Omega$ is a bounded domain of $\mathbb{R}^N$. Non-linear elliptic problems like \eqref{22} appear naturally in several physical phenomena, just to mention some applications, it arises in the theory of non-linear diffusion generated by non-linear sources \cite{JOS1965,JOSCOH1967,JOSSPA1970}, thermal ignition of a chemically active mixture of gases \cite{GEL1959}, membrane buckling \cite{CAL1971} and gravitation equilibrium \cite{CHA1985}. We refer the reader to \cite{DAV2008,DUP2011,espghoguo2009} for a recent survey on this subject.

In recent years, regularity issues about this class of singular elliptic problems have been the focus of an active research area. 
The parameter $\lambda$ measure the non-dimensional strength of the non-linearity. It is well known that if $h$ is super-linear, there exists $\lambda^*\in (0,+\infty)$ such that if $\lambda\in(0,\lambda^*)$, then problem \eqref{22} admits a semi-stable solution $u_\lambda$ and if $\lambda>\lambda^*$, then problem \eqref{22} admits no regular solution. This allows one to define the extremal solution $u^*:=\lim_{\lambda\nearrow\lambda^*}u_\lambda$, which is a weak solution of \eqref{22}. In \cite{NED2000}, G.~Nedev proved regularity results for extremal solutions of \eqref{22} in dimensions $2$ and $3$ and $L^q$ estimates for every $q<N/(N-4)$ when $N\geq 4$ just assuming that $h(s)$ is a positive convex function with $h(0)>0$ and $h^\prime(0)\geq 0$. For a related problem still in the Euclidean case see \cite{CAB2010}, where X.~Cabr\'e assuming $h(s)$ to be a $C^1$ non-decreasing super-linear non-linearity with $h(0)>0$, proved boundedness of the extremal solution for Problem \eqref{22} in dimension $N\leq 4$. 
In dimension $2$ the domain $\Omega$ can be general but, in contrast with Nedev's result, in dimensions $3$ and $4$ the domain is assumed to be convex. After that, X.~Cabr\'{e} and M.~Sanch\'{o}n \cite{CABSAN2013} completed the analysis in \cite{CAB2010} when they proved that if $N\geq 5$ and $\Omega$ is a convex bounded domain of Euclidean space $\mathbb{R}^N$ then the extremal solution belongs to $L^{\frac{2N}{N-4}}$.

Recently, there has been growing interest on singular elliptic partial differential equations on Riemannian manifolds. The problem involving the Laplace-Beltrami operator
\[
\left\{
\begin{alignedat}{3}
-\Delta_g u= & \, \lambda h(u) & \quad \text{in} & \quad\Omega, \\
u = & \, 0  &  \text{on} & \quad \partial\Omega,\\
\end{alignedat}
\right.
\]
where $\Omega$ is a bounded domain was studied recently by D.~Castorina and M.~Sanch\'{o}n in \cite{CASSAN2014} for the inhomogeneous context. They proved qualitative properties for semi-stable solutions and they established $L^\infty$, $L^q$ and $W^{1,q}$ estimates which do not depends on the non-linearity $h(s)$. Furthermore, the authors obtained regularity results for the extremal solution for exponential and power non-linearities. A similar setting has been considered by E.~Berchio, A.~Ferrero and G.~Grillo \cite{BERFERGRI2014} in order to study uniqueness and qualitative properties of radial entire solutions of the Lane--Emden--Fowler equation $-\Delta u=|u|^{m-1}u$ with $m>1$ on certain classes of Cartan--Hadamard manifolds where the so-called Joseph-Lundgren exponent is involved in the stability of solutions. The existence of a stable solution to the semi-linear equation $-\Delta_g u=f(u)$ on a complete, non-compact, boundaryless Riemannian manifold with non-negative Ricci curvature and $f\in C^1$ was studied by A.~Farina, L.~Mari and E.~Valdinoci \cite{FARMARVAL2013}. They classify both the solution and the manifold and also discuss the classification of monotone solutions with respect to the direction of some Killing vector field, in the spirit of a conjecture of De Giorgi. In \cite{MOR2015}, F.~Morabito investigated the existence and uniqueness of positive radial solutions of the problem
\[
\left\{
\begin{alignedat}{3}
\Delta_g u + \lambda u + u^p= & \, 0 & \quad \text{in} &\quad \mathcal{A}, \\
u = & \, 0  &  \text{on} & \quad \mathcal{A},\\
\end{alignedat}
\right.
\]
 when $\lambda <0$, $\mathcal{A}$ is an annular domain in a Riemannian manifold of dimension $N$ endowed with the metric $\mathrm{d}s^2=\mathrm{d}r^2+S^2(r) \mathrm{d} \theta^2$ under suitable assumptions on the function $S^2(r)$. He also show that there exist positive non-radial solutions arising by bifurcation from the radial solution, where $\lambda$ and $p$ are the bifurcation parameters.

Many non-linear problems in physics and mechanics are formulated in equations that
contain the $p-$Laplacian, for example on Non-Newtonian Fluids, Glaceology and Non-linear Elasticity (see \cite{DIAZ1985}). For some problems of non-linear partial differential equations on Riemannian manifold we refer to  \cite{HOLO,KAW,GRO}. Gelfand type problems involving the  $p-$Laplacian in the homogeneous case of the form
\begin{equation}\label{29}
\left\{
\begin{alignedat}{3}
-\Delta_pu = & \, \lambda h(u) & \quad \text{in} & \quad\Omega, \\
u > &\, 0 & \quad \text{in} & \quad \Omega,\\
u = & \, 0  &  \text{on} & \quad \partial\Omega,\\
\end{alignedat}
\right.
\end{equation}
was studied by J.~Garc\'{i}a-Azorero, I.~Peral and J.~Puel \cite{GARPER1992,GARPERPUE1994} where $\Omega$ is a smooth bounded domain of $\mathbb{R}^N$. They proved that for every $p>1$ and $h(s)=e^s$, the extremal solution $u^*$ is an energy solution for every dimension and that it is bounded in some range of dimensions. For a more general non-linearity, X.~Cabr\'{e} and M.~Sanch\'{o}n \cite{CABSAN2007} proved that every semi-stable solution is bounded for a explicit exponent which is optimal for the boundedness of semi-stable solutions and, in particular, it is bigger than the critical Sobolev exponent $p^*-1$. For general $h(s)$ and $p>1$ the interested reader can see \cite{CABCAPSAN2009,CAS2015,SAN2007,WEI2014} for more regularity results about the extremal solution. In \cite{CABCAPSAN2009}, X.~Cabr\'{e}, A.~Capella and M.~Sanch\'{o}n treated the delicate issue about regularity of extremal solutions $u^*$ of \eqref{29} at $\lambda=\lambda^*$ when $\Omega$ is the unit ball of $\mathbb{R}^N$. Among other results, they established pointwise, $L^q$ and $W^{1,q}$ estimates which are optimal and do not depend on the non-linearity $h(s)$.

Furthermore, D.~Castorina and M.~Sanch\'on \cite{CASSAN15} obtain a priori estimates for semi-stable solutions of the reaction-diffusion problem $-\Delta_p u=h(u)$ in $\Omega$ while the reaction term is driven by any positive $C^1$ non-linearity
$h$ and, as a main tool, they develop Morrey-type and Sobolev-type inequalities that involve the functional
\begin{equation}\label{11} I_{p,q}(v;\Omega)=\left(\int_\Omega\left[\left(\frac{1}{p'}|\nabla_{T,v}|\nabla v|^{p/q}|\right)^q+|H_v|^q|\nabla v|^p\right]\,dx\right)^{1/p},\quad\;p,q\geq 1, 
\end{equation} where $v\in C^\infty_0(\overline{\Omega})$. In $\eqref{11}$, $H_v(x)$ denotes the mean curvature at $x$ of the hyper-surface $\{y\in \Omega\: |v(y)|=|v(x)|\}$ and $\nabla_{T,v}$ is the tangential gradient along a level set of $|v|$. In addition to being of independent interest, these geometric inequalities are used, together with judicious choice of test functions in the semi-stability condition, to obtain their a priori estimates for semi-stable solutions. 

In this paper we investigate similar results in the inhomogeneous context of a Riemannian manifold. We use some ideas of \cite{CABSAN2007}, comparison principle for $-\Delta_p$ (because it is uniformly elliptic) and the positivity of the first eigenvalue (as well the corresponding eigenfunction) of $-\Delta_p$ on $\Omega$ (cf. \cite{ANTMUGPUC2007, MAO2014, MON1999}). We point out that the regularity results achieved in this paper represent a geometrical extension of the ones obtained for the Euclidean case in \cite{CABCAPSAN2009}.

Before we state our main result on the regularity of semi-stable solutions for \eqref{02}, let us introduce some basic definitions. We say that $u\in W_{0}^{1,p}(\mathcal{B}_1)$ is a weak solution of \eqref{02} if $h(u)\in L^1(\mathcal{B}_1)$ and
\[
\int_{\mathcal{B}_1}|\nabla u|^{p-2}\nabla u\cdot\nabla\phi\,\mathrm{d}v_g=\int_{\mathcal{B}_1}h(u)\phi\,\mathrm{d}v_g,
\]
for all $\phi\in C_{0}^{\infty}(\mathcal{B}_1)$. Furthermore, by minimal solution we mean smaller than any other super-solution of the Problem \eqref{02} and regular solution means that a weak solution $u$ of \eqref{02} is $C^{1,\beta}(\mathcal{B}_1)$.

Let us state the existence and basic properties of touchdown parameter.

\begin{theorem}\label{26}
	There exist $\lambda^*\in(0,\infty)$ such that
\end{theorem}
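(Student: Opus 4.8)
The plan is to establish the existence of a finite, positive extremal parameter $\lambda^*$ for Problem \eqref{02} by the classical sub/super-solution machinery adapted to the $p$-Laplace Beltrami operator, exploiting that $-\Delta_p$ is uniformly elliptic on the (compactly contained) ball $\mathcal{B}_1$ and admits a comparison principle. Concretely, I would define
\[
\lambda^* := \sup\{\lambda>0 : \eqref{02} \text{ admits a weak solution}\},
\]
and prove (i) $\lambda^*>0$, (ii) $\lambda^*<\infty$, and (iii) for each $\lambda\in(0,\lambda^*)$ there is a minimal solution $u_\lambda$ which is moreover semi-stable and increasing in $\lambda$.

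For the lower bound $\lambda^*>0$: since $h(0)>0$, the constant-zero function is a strict sub-solution for every $\lambda>0$. To produce a super-solution for small $\lambda$, I would take $\bar u = \lambda e$ where $e$ solves $-\Delta_p e = 1$ in $\mathcal{B}_1$, $e=0$ on $\partial\mathcal{B}_1$ (such $e$ exists and is bounded since $-\Delta_p$ is uniformly elliptic on $\mathcal{B}_1$); then $-\Delta_p(\lambda e) = \lambda^{p-1}$, and we need $\lambda^{p-1}\geq \lambda\, h(\lambda\|e\|_\infty)$, i.e. $\lambda^{p-2}\geq h(\lambda\|e\|_\infty)$, which holds for $\lambda$ small when $p\geq 2$; for $1<p<2$ one instead uses $\bar u=M\phi_1$ or a suitably scaled torsion-type function and uses continuity of $h$ near $0$ together with $h(0)>0$ to absorb constants. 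Monotone iteration between $0$ and $\bar u$ then yields a bounded weak solution, so $\lambda^*>0$. For the upper bound $\lambda^*<\infty$: here the superlinearity hypothesis \eqref{21} enters. Using $h(t)\geq \mu t^{p-1} - C$ for every $\mu>0$ with $C=C(\mu)$, I test the weak formulation against the positive first eigenfunction $\phi_1$ of $-\Delta_p$ on $\mathcal{B}_1$ with eigenvalue $\lambda_1>0$ (available by \cite{ANTMUGPUC2007, MAO2014, MON1999}): integrating by parts twice, $\lambda_1\int u^{p-1}\phi_1 = \int (-\Delta_p u)\phi_1 = \lambda\int h(u)\phi_1 \geq \lambda\mu\int u^{p-1}\phi_1 - \lambda C\int\phi_1$, which is impossible once $\lambda\mu>\lambda_1$; hence no solution exists for $\lambda$ large and $\lambda^*<\infty$.

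For each $\lambda<\lambda^*$ pick $\lambda'\in(\lambda,\lambda^*)$ with a solution $u_{\lambda'}$; this is a super-solution at level $\lambda$, and $0$ is a sub-solution, so monotone iteration produces the minimal solution $u_\lambda$, and the construction makes $\lambda\mapsto u_\lambda$ nondecreasing. Semi-stability of the minimal solution is the one genuinely delicate point: the standard argument differentiates the branch $\lambda\mapsto u_\lambda$, showing $\partial_\lambda u_\lambda$ (when it exists) is a positive supersolution of the linearized operator, which forces the first eigenvalue of the linearization $-\mathrm{div}((p-1)|\nabla u_\lambda|^{p-2}\nabla\,\cdot\,) - \lambda h'(u_\lambda)$ to be nonnegative; for the $p$-Laplacian one must be careful because the linearization degenerates where $\nabla u_\lambda$ vanishes, so I would instead argue by minimality directly — if $u_\lambda$ were unstable one could use a negative eigenfunction to build a strictly smaller supersolution, contradicting minimality. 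I expect this stability/minimality step to be the main obstacle, since the degeneracy of $-\Delta_p$ and the merely locally Lipschitz (not $C^2$) regularity of the linearization require care; the existence of $\lambda^*$ itself and its finiteness are comparatively routine given uniform ellipticity on $\mathcal{B}_1$, the comparison principle, and the eigenvalue facts cited above. Finally I would note that $u^*:=\lim_{\lambda\nearrow\lambda^*}u_\lambda$ is well-defined as a monotone limit and is a weak solution of \eqref{02} at $\lambda^*$ (the extremal solution), setting the stage for applying Theorem \ref{20}.
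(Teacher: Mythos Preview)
Your overall architecture matches the paper's, but there are two concrete gaps.

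\textbf{The supersolution for $\lambda^*>0$.} Your scaling $\bar u=\lambda e$ gives $-\Delta_p\bar u=\lambda^{p-1}$, so the supersolution condition reads $\lambda^{p-2}\ge h(\lambda e)\ge h(0)>0$. For $p>2$ the left side tends to $0$ as $\lambda\to 0^+$, so the inequality \emph{fails} for small $\lambda$; it is precisely for $1<p<2$ that $\lambda^{p-2}\to\infty$ and the argument works. You have the cases reversed. The paper avoids this by taking the unscaled torsion function $w$ itself as supersolution: then one only needs $1\ge\lambda\, h(\max w)$, i.e.\ $\lambda\le 1/h(\|w\|_\infty)$, which is uniform in $p$.

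\textbf{The upper bound $\lambda^*<\infty$.} This is the more serious issue. Your identity ``$\lambda_1\int u^{p-1}\phi_1=\int(-\Delta_p u)\phi_1$'' is false for $p\ne 2$: the $p$-Laplacian is not self-adjoint, and testing the equation for $u$ against $\phi_1$ yields $\int|\nabla u|^{p-2}\nabla u\cdot\nabla\phi_1$, while testing the eigenvalue equation against $u$ yields $\lambda_1\int\phi_1^{p-1}u$. Neither equals $\lambda_1\int u^{p-1}\phi_1$, so the Pohozaev-type balance you invoke collapses. The paper circumvents this nonlinearity obstruction with a genuinely different argument: assuming a regular solution $u$ exists for $\lambda$ large, it uses the comparison principle to trap an iteratively constructed monotone sequence $v_1\le v_2\le\cdots\le u$ with $-\Delta_p v_n=(\lambda_1+\epsilon)v_{n-1}^{p-1}$, whose limit is a positive eigenfunction at level $\lambda_1+\epsilon$, contradicting that $\lambda_1$ is isolated. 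Some nonlinear substitute of this kind is needed; the linear duality trick does not survive.

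Finally, semi-stability of $u_\lambda$ and the extremal-solution limit are not part of this theorem's statement in the paper; they are deferred to the proof of Theorem~\ref{03}. Your instinct that stability is the delicate step is correct, but it belongs elsewhere.
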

\begin{enumerate}
	\item[(i)] For $0< \lambda < \lambda^*$, the problem \eqref{02} has a regular minimal solution $u_{\lambda}$.
	\item[(ii)] For $\lambda>\lambda^*$, \eqref{02} admits no weak solution.
	\item[(iii)] The map $\lambda\rightarrow u_\lambda$ is increasing.
\end{enumerate}

As a consequence of Theorem \ref{26}, the increasing limit 
\[
u^*=\lim_{\lambda\nearrow\lambda^*}u_\lambda
\]
is well defined by the point-wise increasing property. If $u^*$ is a weak solution of $(\mathcal{P}_{\lambda^*})$, then $u^*$ is called the extremal solution. Since the extremal solutions can be obtained as the limit of classical minimal solutions, our next result is useful in order to prove that $u^*$ has the same regularity properties as the ones stated in Theorem \ref{20}. For this, we need to bound $u^{p-1}$ and $h(u)$ in $L^1(\mathcal{B}_1)$ uniformly in $\lambda$. This is possible because we have the growth condition \eqref{21} on $h(s)$ and the radially decreasing property of the minimal solutions $u_\lambda$. Let us now state precisely our results for \eqref{02}.

\begin{theorem}\label{03}
Suppose that $N < p+4p/(p-1)$ and let $h(s)$ be a positive and increasing $C^1-$function satisfying \eqref{21}. Then $u^*$ is a semi-stable solution of $(P_{\lambda^*})$ and $u^*\in L^\infty(\mathcal{B}_1)$.
\end{theorem}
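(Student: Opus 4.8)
The plan is to obtain $u^*$ as the increasing limit $u_\lambda \nearrow u^*$ as $\lambda \nearrow \lambda^*$ furnished by Theorem \ref{26}, and to run the uniform estimate of Theorem \ref{20}(a) along this family. The key point that must be established first is that the family $\{u_\lambda\}_{\lambda < \lambda^*}$ is bounded in $L^p(\mathcal{B}_1)$ uniformly in $\lambda$; once this is in hand, Theorem \ref{20}(a) applied to each $u_\lambda$ (which is a radial, decreasing, semi-stable classical solution of \eqref{01} with $f(s) = \lambda h(s)$, as remarked after the definition of semi-stability, since minimal solutions are local minimizers) yields $\|u_\lambda\|_{L^\infty(\mathcal{B}_1)} \le C_{N,p,\alpha,\psi}\|u_\lambda\|_{L^p(\mathcal{B}_1)} \le C$ with $C$ independent of $\lambda$. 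Passing to the limit $\lambda \nearrow \lambda^*$ by monotone convergence then gives $u^* \in L^\infty(\mathcal{B}_1)$, and the uniform $L^\infty$ bound together with $h \in C^1$ gives $h(u_\lambda)$ bounded in $L^\infty$, hence in $L^1(\mathcal{B}_1)$ uniformly; standard elliptic arguments (testing the weak formulation against $u_\lambda$ and using the uniform bounds) show $u_\lambda$ is bounded in $W_0^{1,p}(\mathcal{B}_1)$, so a subsequence converges weakly in $W_0^{1,p}$ and strongly in $L^p$ to $u^*$, which is therefore a weak solution of $(\mathcal{P}_{\lambda^*})$. Finally, the semi-stability of $u^*$ passes to the limit: for each fixed radial $\xi \in C_c^1(\mathcal{B}_1 \setminus \{\mathcal{O}\})$ the inequality \eqref{04} holds for $u_\lambda$, and since $u_\lambda \to u^*$ in $C^1_{loc}(\mathcal{B}_1 \setminus \{\mathcal{O}\})$ (by elliptic regularity away from the origin, using the uniform $L^\infty$ bound) one may pass to the limit in each term, and $u^*_r \le 0$ with strict inequality following from the strong maximum principle / Hopf-type argument for $-\Delta_p$.

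The core difficulty, and where the hypothesis $N < p + 4p/(p-1)$ and condition \eqref{21} enter, is the uniform $L^p$ (indeed $L^1$) bound on $u_\lambda$. The standard device is to test the equation against the first eigenfunction. Let $\varphi_1 > 0$ denote the first eigenfunction of $-\Delta_p$ on $\mathcal{B}_1$ with eigenvalue $\mu_1 > 0$ (positivity of both is recalled in the introduction via \cite{ANTMUGPUC2007, MAO2014, MON1999}), normalized so that $\int_{\mathcal{B}_1} \varphi_1^{p-1}\, \mathrm{d}v_g = 1$. Using $\varphi_1$ as test function in the weak formulation of $(\mathcal{P}_\lambda)$ — more precisely, using that $-\Delta_p \varphi_1 = \mu_1 \varphi_1^{p-1}$ and a convexity/monotonicity comparison — one obtains
\[
\lambda \int_{\mathcal{B}_1} h(u_\lambda)\, \varphi_1^{p-1}\, \mathrm{d}v_g \;=\; \mu_1 \int_{\mathcal{B}_1} u_\lambda^{p-1}\, \varphi_1^{p-1}\, \mathrm{d}v_g .
\]
By \eqref{21}, $h(t) \ge C_\varepsilon + (\mu_1/\varepsilon)\, t^{p-1}$ for suitable constants once $\varepsilon$ is small and $\lambda$ is bounded below away from $0$ (which we may assume, since we only care about $\lambda \nearrow \lambda^*$); combining with $h(0) > 0$, this forces $\int_{\mathcal{B}_1} u_\lambda^{p-1} \varphi_1^{p-1}\, \mathrm{d}v_g$ and $\int_{\mathcal{B}_1} h(u_\lambda) \varphi_1^{p-1}\, \mathrm{d}v_g$ to stay bounded uniformly in $\lambda < \lambda^*$. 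To upgrade a weighted $L^{p-1}(\varphi_1^{p-1}\,\mathrm{d}v_g)$ bound to a genuine $L^p(\mathcal{B}_1)$ bound one exploits that $u_\lambda$ is radial and decreasing: away from $\partial \mathcal{B}_1$ the weight $\varphi_1^{p-1}$ is bounded below, so $u_\lambda$ is bounded in $L^{p-1}$ on, say, $\mathcal{B}_{1/2}$, and since $u_\lambda$ is decreasing in $r$ its values on $\mathcal{B}_1 \setminus \mathcal{B}_{1/2}$ are controlled by its value at $r = 1/2$, which in turn is controlled by $\|u_\lambda\|_{L^{p-1}(\mathcal{B}_{1/2})}$ by monotonicity. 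This yields a uniform bound in $L^{p-1}$, hence (by the same monotone-in-$r$ argument and interpolation, or by directly bounding the sup on an interior ball) in $L^p$.

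The one technical subtlety I anticipate is justifying the use of $\varphi_1$ as a test function when $u_\lambda$ is only known a priori to be a weak $W_0^{1,p}$ solution with $h(u_\lambda) \in L^1$: this is handled by the standard truncation argument (test against $T_k(\varphi_1)$ or against $\varphi_1$ truncated near the boundary, then let the truncation parameter go to infinity using monotone convergence, the positivity of all integrands being guaranteed since $h > 0$, $u_\lambda > 0$, $\varphi_1 > 0$). An equivalent and perhaps cleaner route, which I would actually prefer to write, is: since $u_\lambda$ is the \emph{minimal} solution, it suffices to bound the minimal solution, and minimal solutions are classical (regular, by Theorem \ref{26}(i)) for $\lambda < \lambda^*$, so all the integrations by parts above are fully legitimate, and only the final passage $\lambda \nearrow \lambda^*$ requires the monotone/weak convergence machinery. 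I would therefore organize the proof as: (1) recall $u_\lambda$ regular and semi-stable for $\lambda < \lambda^*$; (2) eigenfunction test $\Rightarrow$ uniform weighted $L^{p-1}$ bound; (3) radial monotonicity $\Rightarrow$ uniform $L^p$ bound; (4) Theorem \ref{20}(a) $\Rightarrow$ uniform $L^\infty$ bound; (5) pass to the limit to conclude $u^*$ is a bounded, semi-stable weak solution of $(\mathcal{P}_{\lambda^*})$.
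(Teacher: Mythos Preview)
Your overall architecture is correct and matches the paper's: obtain a uniform-in-$\lambda$ bound on $\|u_\lambda\|_{L^p(\mathcal{B}_1)}$ (equivalently on $\|u_\lambda^{p-1}\|_{L^1}$ and $\|h(u_\lambda)\|_{L^1}$), then feed this into the a priori machinery of Theorem~\ref{20} and Proposition~\ref{23}, and finally pass to the limit $\lambda \nearrow \lambda^*$ to conclude that $u^*$ is a bounded semi-stable weak solution.

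The gap is in your step~(2), the eigenfunction test. The displayed identity
\[
\lambda \int_{\mathcal{B}_1} h(u_\lambda)\, \varphi_1^{p-1}\, \mathrm{d}v_g \;=\; \mu_1 \int_{\mathcal{B}_1} u_\lambda^{p-1}\, \varphi_1^{p-1}\, \mathrm{d}v_g
\]
is simply false for $p \neq 2$. The $p$-Laplacian is nonlinear: testing $-\Delta_p u_\lambda = \lambda h(u_\lambda)$ against $\varphi_1$ yields $\int |\nabla u_\lambda|^{p-2}\nabla u_\lambda \cdot \nabla \varphi_1 = \lambda \int h(u_\lambda)\varphi_1$, while testing $-\Delta_p \varphi_1 = \mu_1 \varphi_1^{p-1}$ against $u_\lambda$ yields $\int |\nabla \varphi_1|^{p-2}\nabla \varphi_1 \cdot \nabla u_\lambda = \mu_1 \int \varphi_1^{p-1} u_\lambda$; the two left-hand sides are unrelated in general, and neither produces your right-hand side. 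Picone-type inequalities give only one-sided information, and not with the exponents placed as you have them. The phrase ``convexity/monotonicity comparison'' does not repair this, and without this identity your step~(2) does not deliver the uniform bound.

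The paper avoids the eigenfunction entirely and exploits the radial ODE directly. From $(\psi^{N-1}|u_r|^{p-1})_r = f(u)\psi^{N-1} \ge 0$ (which follows from \eqref{05} and $u_r<0$), the quantity $\psi^{N-1}|u_r|^{p-1}$ is nondecreasing in $r$; combined with the mean value theorem on $(1/2,1)$ this gives $\|\psi^{N-1}|u_r|^{p-1}\|_{L^\infty(B_{1/2})} \le C\|u_\lambda^{p-1}\|_{L^1(B_{1/2})}$. Testing the equation with a radial cutoff $\phi(r)=\min\{1,(2-4r)^+\}$ then bounds $\|\lambda h(u_\lambda)\|_{L^1(B_{1/4})}$ by the same quantity. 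Invoking \eqref{21} in the form $\lambda h(t) \ge \delta^{-1} t^{p-1} - C_\delta$ (for $\lambda>\lambda^*/2$) produces an absorption inequality $\|u_\lambda^{p-1}\|_{L^1(B_{1/4})} \le C\delta\,\|u_\lambda^{p-1}\|_{L^1(B_{1/2})} + C_\delta$, and the radially decreasing property controls the annulus $B_{1/2}\setminus B_{1/4}$ by $\|u_\lambda^{p-1}\|_{L^1(B_{1/4})}$. Taking $\delta$ small closes the loop. This direct bootstrap, not the eigenfunction, is the mechanism that works in the quasilinear case; the remainder of the paper's proof (the $W^{1,p}$ bound via Proposition~\ref{23}, passage to the limit, and semi-stability of $u^*$ via Fatou) proceeds along the lines you sketched in steps~(4)--(5).
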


\subsection{Outline} In the next section we bring basic facts about the p-Laplace Beltrami operator which will be used through the paper. In Section \ref{15} we prove existence of extremal parameter $\lambda^*$ and minimal solutions of \eqref{02} for $0<\lambda<\lambda^*$. In Section \ref{24} we prove that key-estimate \eqref{27} by using a suitable choice of test functions under the semi-stability property. In Section \ref{25} we use \eqref{27} to prove our main theorem about regularity for radially symmetric and decreasing semi-stable solutions of Problem \eqref{01} and apply this results for the study of the regularity of extremal solutions of \eqref{02}.

\section{Proof of Theorem \ref{26}}\label{15}
Our first proposition establishes the analog of the classical results for \eqref{02} in the Euclidean case. Using some ideas coming from X.~Cabr\'{e} and M.~Sanch\'{o}n \cite{CABSAN2007} and X.~Luo, D.~Ye and F.~Zhou \cite{LUYEZH2011} we prove the existence of a critical parameter $\lambda^*$ which is related with the resolvability of \eqref{02}.

\begin{proof}[Proof of Theorem \ref{26}]
For $(i)$, let $w \in W_{0}^{1,p}(\mathcal{B}_1)$ a weak solution of $-\mathrm{div}(|\nabla_g w|^{p-2}\nabla_g w) = 1$ in $\mathcal{B}_1$, that is
\[
\displaystyle\int_{\mathcal{B}_1}|\nabla_g w|^{p-2}\nabla_g w\cdot\nabla_g \phi\, \mathrm{d}\sigma = \displaystyle\int_{\mathcal{B}_1}\phi \, \mathrm{d} \sigma,\quad\forall\phi\in C_{0}^{\infty}(\mathcal{B}_1).
\]
We can see that $w \in C^{1,\alpha}(\overline{\mathcal{B}_1})$ by using $C^{1,\alpha}$-regularity results (see \cite{LIE1988, BEN,TOL}). By the Maximum principle \cite[Theorem 3.3]{ANTMUGPUC2007} $w$ is non-negative in $\mathcal{B}_1$. It is easy verify that $0$ is sub-solution of \eqref{02} and if $\lambda \leq \lambda_0:=1/h(\max_{\overline{\mathcal{B}_1}} w),$
\[
\displaystyle\int_{\mathcal{B}_1}|\nabla_g w|^{p-2}\nabla_g  w\cdot\nabla_g\phi\, \mathrm{d}\sigma= \displaystyle\int_{\mathcal{B}_1}\phi\, \mathrm{d}\sigma \geq \displaystyle\int_{\mathcal{B}_1}\lambda h(w)\phi\, \mathrm{d}\sigma
\]
that is, $w$ is a super-solution of \eqref{02}. Thus, for any $\lambda\leq\lambda_0$, Problem \eqref{02} has a weak solution $u\in W_{0}^{1,p}(\mathcal{B}_1)$ given by Sub and Super-solution Method (see \cite{KUR1989}) with $0 \leq u \leq w$ in $\overline{\mathcal{B}_1}$. This implies that $u\in C^{1,\alpha}(\overline{\mathcal{B}_1})$. As any regular solution $u$ of \eqref{02} is also a super-solution for $(P_\mu)$ if $\mu \in (0,\lambda)$, the set of $\lambda$ for which \eqref{02} admits a regular solution is an interval. For $(ii)$ we will show that for $\lambda$ sufficient large, there is no regular solution for \eqref{02}, so $\lambda^* <+\infty.$ It is well known that for the non-linear eigenvalue problem
\[
\left\{
\begin{alignedat}{3}
-\textrm{div}(|\nabla_g v_1|^{p-2}\nabla_g v_1) = & \, \lambda_1 \vert v_1\vert^{p-2}v_1 & \quad \text{in} & \quad \mathcal{B}_1, \\
v_1 = & \, 0  &  \text{in} & \quad \partial\mathcal{B}_1,\\
\end{alignedat}
\right.
\]
there exists a smaller positive and simple eigenvalue $\lambda_1$ with a positive eigenfunction $v_1$ in $\mathcal{B}_1$. Now, suppose that \eqref{02} admits a regular solution $u$ for $\lambda > \lambda_1$. The regularity result in \cite{LIE1988} give that $v_1\in C^{1,\alpha}(\overline{\mathcal{B}_1})$. By homogeneity, we can assume that $\Vert v_1\Vert_{\infty}<h(0)^{\frac{1}{p-1}}$. Note that
$$-\textrm{div}(|\nabla_g v_1|^{p-2}\nabla_g v_1) = \lambda_1 v_{1}^{p-1} \leq \lambda_1 h(0) < \lambda h(u) = -\textrm{div}(|\nabla_g u|^{p-2}\nabla_g u).$$
By the comparison principle \cite{ANTMUGPUC2007} we have that $v_1\leq u$. Let us to take $v_2$ a solution of
\[
\left\{
\begin{alignedat}{3}
-\textrm{div}(|\nabla_g v_2|^{p-2}\nabla_g v_2) = & \, (\lambda_1 + \epsilon) v_{1}^{p-1} & \quad \text{in} & \quad \mathcal{B}_1, \\
v_2 = & \, 0  &  \text{in} & \quad \partial\mathcal{B}_1,\\
\end{alignedat}
\right.
\]
where $\epsilon$ is a positive constant. For $\lambda>\frac{\lambda_1+\epsilon}{h(0)}\max_{\overline{\mathcal{B}_1}}u^{p-1}$ we obtain
$$-\textrm{div}(|\nabla_g v_2|^{p-2}\nabla_g v_2) = (\lambda_1 + \epsilon) v_{1}^{p-1} \leq (\lambda_1 + \epsilon) u^{p-1} \leq \lambda h(u) = -\textrm{div}(|\nabla_g u|^{p-2}\nabla_g u).$$
Using the comparison principle again we obtain $v_1 \leq v_2 \leq u.$ Now, let us define recursively $u_n$ as the unique solution of
\[
\left\{
\begin{alignedat}{3}
-\textrm{div}(|\nabla_g v_n|^{p-2}\nabla_g v_n) = & \, (\lambda_1 + \epsilon) v_{n-1}^{p-1} & \quad \text{in} & \quad \mathcal{B}_1, \\
v_n = & \, 0  &  \text{in} & \quad \partial\mathcal{B}_1.\\
\end{alignedat}
\right.
\]
By comparison principle we obtain $v_1 \leq ... \leq v_{n-1} \leq v_n \leq u \in C^{1,\alpha}(\overline{\mathcal{B}_1}).$ This implies that $v_n\rightharpoonup u_\lambda$ in $W_{0}^{1,p}(\mathcal{B}_1)$ and consequently $u_\lambda$ satisfies
\[
\left\{
\begin{alignedat}{3}
-\textrm{div}(|\nabla_g u_\lambda|^{p-2}\nabla_g u_\lambda) = & \, (\lambda_1 + \epsilon) u_\lambda^{p-1} & \quad \text{in} & \quad \mathcal{B}_1, \\
u_\lambda = & \, 0  &  \text{in} & \quad \partial\mathcal{B}_1,\\
\end{alignedat}
\right.
\]
which is impossible since the first eigenvalue of p-Laplace Beltrami operator is isolated (see \cite{ZHA,LIND}). Define the critical threshold $\lambda^*$ as the supremum of $\lambda>0$ for which \eqref{02} admits a regular solution. Thus we have that $\lambda^*<+\infty.$ Note that, by the construction above, $u_\lambda$ is independent of the choice of the super-solution. Since any regular solution of \eqref{02} is also a super-solution of \eqref{02}, we can conclude that $u_\lambda$ is a regular minimal solution of \eqref{02}. In order to check (iii), let $\lambda\leq\mu$. Thus, $u_\mu$ is a super-solution of \eqref{02}, which implies that $u_\lambda \leq u_\mu$, that is, the map $\lambda\rightarrow u_\lambda$ is increasing.
\end{proof}

\section{A priori estimates}\label{24}

In this section we prove the principal estimate \eqref{27}, which as we already mention it is the crucial in our argument to obtain the regularity of the semi-stable solutions in Theorem \ref{20}. The main idea is to apply an appropriate test function in the stability
inequality (see Proposition \ref{23}). The radial form of \eqref{01} can be written as follows
\begin{equation}\label{05}
-(p-1)|u_r|^{p-2}u_{rr}-\frac{(N-1)\psi^\prime}{\psi}|u_r|^{p-2}u_r = f(u) \quad \text{ with } r\in (0,1).
\end{equation}

Let us discuss a few preliminary estimates which will be used in our argument. In the next Lemma we prove that the second variation of energy associated to \eqref{01} is independent of the non-linearity $f(s)$.

\begin{lemma}\label{08}
Let $u\in W_{r}^{1,p}(\mathcal{B}_1)$ be a semi-stable solution of \eqref{01} satisfying $u_r(r)<0$ for all $r\in (0,1)$. Then, for all radially symmetric function $\eta\in C_{c}^{1}(\mathcal{B}_1\setminus\left\{\mathcal{O}\right\})$ it holds
\[
\displaystyle\int_{\mathcal{B}_1}|u_r|^p\left[ (p-1)|\eta_r|^2+\frac{\partial}{\partial r}\left( \frac{(N-1)\psi^\prime}{\psi} \right)\eta^2\right]\,\mathrm{d} v_g\geq 0.
\]
\end{lemma}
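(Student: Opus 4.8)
The plan is to insert the specific test function $\xi=|u_r|\,\eta$ into the stability inequality \eqref{04} and to get rid of the term carrying $f'(u)$ by differentiating the radial equation \eqref{05}. Since all the integrands are radial, one has $\int_{\mathcal{B}_1}G\,\mathrm{d}v_g=|\mathbb{S}^{N-1}|\int_0^1 G(r)\,\psi(r)^{N-1}\,\mathrm{d}r$, so it is convenient to set $w:=-u_r$, which is positive on $(0,1)$ by hypothesis and of class $C^1$ there by interior regularity away from the pole (where \eqref{05} is non-degenerate, since $u_r\neq0$); in particular $\xi=w\eta\in C^1_c(\mathcal{B}_1\setminus\{\mathcal{O}\})$ is an admissible competitor in \eqref{04}. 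With $|\nabla_g u|=w$ and $\xi_r=w_r\eta+w\eta_r$, inequality \eqref{04} reads
\[
(p-1)\int_0^1 w^{p-2}\big(w_r\eta+w\eta_r\big)^2\psi^{N-1}\,\mathrm{d}r\;\ge\;\int_0^1 f'(u)\,w^2\eta^2\,\psi^{N-1}\,\mathrm{d}r .
\]

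To control the right-hand side I would produce the equation satisfied by $w$. Rewriting \eqref{05} in terms of $w$ gives $(p-1)w^{p-2}w_r+\frac{(N-1)\psi'}{\psi}w^{p-1}=f(u)$; differentiating this in $r$, using $u_r=-w$, and multiplying through by $w$ yields
\[
(p-1)w^{p-1}w_{rr}+(p-1)(p-2)w^{p-2}w_r^2+(p-1)\tfrac{(N-1)\psi'}{\psi}w^{p-1}w_r+\big(\tfrac{(N-1)\psi'}{\psi}\big)_r\, w^{p}+f'(u)w^2=0 .
\]
Multiplying this identity by $\eta^2\psi^{N-1}$ and integrating over $(0,1)$ expresses $\int_0^1 f'(u)w^2\eta^2\psi^{N-1}\,\mathrm{d}r$ entirely in terms of $w$, $w_r$, $w_{rr}$ and $\psi$, with no further appearance of $f$. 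I then substitute this expression into the displayed stability inequality and expand $(w_r\eta+w\eta_r)^2$.

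After this substitution the inequality takes the shape
\[
(p-1)\int_0^1 w^p\eta_r^2\,\psi^{N-1}\,\mathrm{d}r+\int_0^1\big(\tfrac{(N-1)\psi'}{\psi}\big)_r\, w^p\eta^2\psi^{N-1}\,\mathrm{d}r\;+\;R\;\ge\;0 ,
\]
where $R$ gathers the remaining integrals, each of which contains $w_{rr}$, $w_r^2$, or $w_r$. The point is that $R=0$: integrating the $w_{rr}$-integral by parts produces no boundary contribution because $\eta$ has compact support in $(0,1)$, and since
\[
\frac{\mathrm{d}}{\mathrm{d}r}\big(w^{p-1}\eta^2\psi^{N-1}\big)=(p-1)w^{p-2}w_r\eta^2\psi^{N-1}+2w^{p-1}\eta\eta_r\psi^{N-1}+(N-1)\tfrac{\psi'}{\psi}w^{p-1}\eta^2\psi^{N-1},
\]
this integration by parts returns exactly the negatives of the other integrals contributing to $R$. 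The surviving inequality, multiplied by $|\mathbb{S}^{N-1}|$ and with $w^p=|u_r|^p$, is precisely the assertion of the lemma.

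The only genuinely delicate step is the bookkeeping in the last paragraph: one must keep track of the several $w^{p-2}w_r^2\eta^2$, $w^{p-1}w_r\eta\eta_r$ and $\frac{(N-1)\psi'}{\psi}w^{p-1}w_r\eta^2$ contributions — some coming from the expansion of $(w_r\eta+w\eta_r)^2$ and some from the substitution for $f'(u)w^2$ — and check that the single integration by parts of the $w_{rr}$ term cancels all of them. By contrast, the admissibility of $\xi=|u_r|\eta$ and the regularity of $u$ used to differentiate \eqref{05} are routine away from the pole $\mathcal{O}$, where the equation is uniformly elliptic thanks to $u_r\neq0$.
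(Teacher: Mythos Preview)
Your proof is correct and follows essentially the same approach as the paper: insert $\xi=u_r\eta$ into the stability inequality and eliminate $f'(u)$ via the linearized radial equation. The only cosmetic difference is that you differentiate \eqref{05} and then integrate the $w_{rr}$ term by parts, whereas the paper multiplies \eqref{05} by $(\eta^2u_r\psi^{N-1})_r$ and integrates; these are dual versions of the same integration-by-parts identity.
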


\begin{proof}
We start considering $\eta\in C_{c}^{1}(\mathcal{B}_1\setminus\mathcal{O})$ be a radial function with compact support in $\mathcal{B}_1\setminus\mathcal{O}$ and choosing $\xi=u_r\eta$ as test function in \eqref{04} there holds
\begin{equation}\label{06}
\begin{alignedat}{1}
0\leq \int_{\mathcal{B}_1}&(p-1)|u_r|^{p-2}|\nabla_g(u_r\eta)|^2-f^\prime(u)|u_r|^2\eta^2\,\mathrm{d} v_g\\
&=\int_{\mathcal{B}_1}(p-1)|u_r|^{p}|\nabla_g\eta|^2+(p-1)|u_r|^{p-2}\nabla_g(\eta^2u_r)\nabla_g(u_r) - f^\prime(u)|u_r|^2\eta^2\,\mathrm{d} v_g.
\end{alignedat}
\end{equation}
On the other hand, multiplying \eqref{05} by $(\eta^2u_r\psi^{N-1})_r$, integrating and using integration by parts we are able to compute
\[
\begin{alignedat}{1}
0 = &\int_{0}^{1}(p-1)|u_r|^{p-2}u_{rr}\left( \eta^2 u_r \psi^{N-1} \right)_r+\left[ \frac{(N-1)\psi^\prime}{\psi}|u_r|^{p-2}u_r + g(u) \right]\left( \eta^2 u_r\psi^{N-1} \right)_r\,\mathrm{d} v_g\\
= & \int_{0}^{1}(p-1)|u_r|^{p-2}u_{rr}(\eta^2u_r\psi^{N-1})_r-\left[ \frac{(N-1)\psi^\prime}{\psi}|u_r|^{p-2}u_r + g(u) \right]_r\eta^2u_r\psi^{N-1}\,\mathrm{d} v_g,
\end{alignedat}
\]
which together with $\partial_r(|u_r|^{p-2}u_r)=(p-1)|u_r|^{p-2}u_{rr}$ implies
\[
\begin{alignedat}{2}
0&=\int_{0}^{1}(p-1)|u_r|^{p-2}u_{rr}\partial_r(\eta^2 u_r \psi^{N-1})\,\mathrm{d} r - \int_{0}^{1}\partial_r\left( \frac{(N-1)\psi^\prime}{\psi} \right)|u_r|^{p-2}u_r\eta^2 u_r\psi^{N-1}\,\mathrm{d} r\\
 & -\int_{0}^{1}\frac{(N-1)\psi^\prime}{\psi}(p-1)|u_r|^{p-2}u_{rr}\eta^2 u_r \psi^{N-1}\,\mathrm{d} r - \int_{0}^{1} f^\prime (u)u_r\eta^2 u_r \psi^{N-1}.
\end{alignedat}
\]
Thus
\[
\begin{alignedat}{2}
0&=\int_{0}^{1}(p-1)|u_r|^{p-2}u_{rr}\partial_r\left( \eta^2u_r \right)\psi^{N-1}\,\mathrm{d} r-\int_{0}^{1}\partial_r\left( \frac{(N-1)\psi^\prime}{\psi} \right)|u_r|^{p-2}u_{r}^{2}\eta^2\psi^{N-1}\,\mathrm{d} r\\
 & -\int_{0}^{1}f^\prime(u)\eta^2u_{r}^{2}\psi^{N-1}\,\mathrm{d} r,
\end{alignedat}
\]
which yields
\begin{equation}\label{07}
\int_{\mathcal{B}_1}\partial_r\left(\frac{(N-1)\psi^\prime}{\psi}\right)|u_r|^p\eta^2\,\mathrm{d} v_g=\int_{\mathcal{B}_1}(p-1)|u_r|^{p-2}u_{rr}\partial_r(\eta^2u_r)\,\mathrm{d} v_g - \int_{\mathcal{B}_1}f^\prime(u)\eta^2u_{r}^{2}\,\mathrm{d} v_g.
\end{equation}
Using \eqref{06} and \eqref{07} we have
\[
\begin{alignedat}{2}
0&\leq\int_{\mathcal{B}_1}(p-1)|u_r|^{p-2}u_{r}^{2}|\nabla \eta|^2+ \int_{\mathcal{B}_1}(p-1)|u_r|^{p-2}u_{rr}\nabla(\eta^2u_r)\nabla u_r-f^\prime(u)u_{r}^{2}\eta^2\,\mathrm{d} v_g\\
 & = \int_{\mathcal{B}_1}(p-1)|u_r|^p|\nabla\eta|^2+\int_{\mathcal{B}_1}\frac{\partial}{\partial r}\left(\frac{(N-1)\psi^\prime}{\psi}\right)|u_r|^p\eta^2\,\mathrm{d} v_g\\
 & = \int_{\mathcal{B}_1}|u_r|^p\left[ (p-1)|\eta_{r}|^2+\frac{\partial}{\partial r}\left( \frac{(N-1)\psi^\prime}{\psi}\eta^2 \right) \right]\,\mathrm{d} v_g,
\end{alignedat}
\]
which is the desired conclusion.
\end{proof}

We obtain $L^p-$estimates for the radial derivative of semi-stable solutions of \eqref{01} with the help of Lemma \ref{08}. For that we consider a suitable class of test functions to analyze the inhomogeneous context of a Riemannian manifold assuming that $p\leq N$ and $1\leq\alpha<1+\sqrt{(N-1)/(p-1)}$. To be more precise,

\begin{proposition}\label{23}
Let $u\in W_{r}^{1,p}(\mathcal{B}_1)$ be a semi-stable solution in $\mathcal{B}_1\setminus \mathcal{O}$ of \eqref{01} satisfying $u_r(r)<0$ for $r\in (0,1)$ and $\delta=\delta(\psi)\in (0,1/2)$ such that $\psi^\prime>0$ in $[0,\delta]$.
Then
\[
\int_{0}^{\delta}|u_r|^p\psi^{N-1-2\alpha}\mathrm{d}r \leq C_{N,p,\alpha,\psi}\Vert u \Vert_{L^p(\mathcal{B}_1)}^p
\]
for every $1\leq\alpha<1+\sqrt{(N-1)/(p-1)}$, where $C_{N,p,\alpha,\psi}$ is a constant depending only on $N,$ $p,$ $\alpha$ and $\psi$.
\end{proposition}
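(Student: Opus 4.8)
The plan is to exploit Lemma~\ref{08}, which gives a stability inequality for radial semi-stable solutions that is free of the nonlinearity, and to feed into it a test function of the form $\eta = \psi^{-\beta}\zeta$, where $\zeta$ is a radial cut-off function that equals $1$ on $[0,\delta]$ and vanishes near $r=1$ (so that $\eta$ has compact support away from the pole once we also truncate near $r=0$, or rather we first work on $[\varepsilon,\delta]$ and let $\varepsilon\to 0$). The exponent $\beta$ will be chosen of the order $\alpha-1$; the admissible range $1\le\alpha<1+\sqrt{(N-1)/(p-1)}$ should correspond exactly to the range of $\beta$ for which the quadratic expression in the stability inequality has the right sign. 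First I would compute $\eta_r = -\beta\psi^{-\beta-1}\psi'\zeta + \psi^{-\beta}\zeta_r$, so that $(p-1)|\eta_r|^2$ produces a leading term $(p-1)\beta^2\psi^{-2\beta-2}(\psi')^2\zeta^2$ together with cross terms and a $\zeta_r^2$ term supported in $\delta\le r\le 1$.

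\textbf{Key steps.} Next I would combine this with the curvature term $\partial_r\big((N-1)\psi'/\psi\big) = (N-1)(\psi''\psi - (\psi')^2)/\psi^2$ appearing in Lemma~\ref{08}. On $[0,\delta]$ where $\psi'>0$, after multiplying $\eta^2$ by this term one gets a contribution of order $\psi^{-2\beta}\cdot(\psi''/\psi) - \psi^{-2\beta-2}(\psi')^2$ times $|u_r|^p$. The crucial point is that the genuinely dangerous negative contribution is $-(N-1)\psi^{-2\beta-2}(\psi')^2|u_r|^p\zeta^2$, which must be dominated by the positive term $(p-1)\beta^2\psi^{-2\beta-2}(\psi')^2|u_r|^p\zeta^2$ coming from $|\eta_r|^2$. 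This forces $(p-1)\beta^2 > (N-1)$, i.e. $\beta > \sqrt{(N-1)/(p-1)}$; together with the relation $2\beta = 2\alpha - 2 + (\text{correction})$ that makes $\psi^{-2\beta}(\psi')^2 \sim \psi^{N-1-2\alpha}$ after inserting the volume element $\psi^{N-1}\,\mathrm{d}r$, one recovers precisely $\alpha < 1+\sqrt{(N-1)/(p-1)}$ in the limit, with a strict inequality so that the coefficient of $\int_0^\delta |u_r|^p\psi^{N-1-2\alpha}$ on the favourable side is a positive constant $c_{N,p,\alpha}$. The terms supported on $[\delta,1]$ (from $\zeta_r$ and from the curvature term there, which may change sign once $\psi'$ does) are harmless: there $\psi$ is bounded above and below by positive constants depending only on $\psi$ and $\delta$, so they are controlled by $C\int_{\mathcal B_1\setminus\mathcal B_\delta}|u_r|^p\,\mathrm{d}v_g$. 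Finally I would bound that remaining quantity: since $u$ is radially decreasing, a Caccioppoli-type argument — testing the equation \eqref{05} against $u$ times a cut-off, or more simply using that $\int_{\mathcal B_1\setminus\mathcal B_\delta}|u_r|^p\,\mathrm{d}v_g$ is comparable to the energy on that fixed annulus and hence, via the equation and the decrease of $u$, to $\|u\|_{L^p(\mathcal B_1)}^p$ — gives $\int_{\mathcal B_1\setminus\mathcal B_\delta}|u_r|^p\,\mathrm{d}v_g \le C_{N,p,\psi,\delta}\|u\|_{L^p(\mathcal B_1)}^p$. Collecting everything and absorbing the positive constant yields the claimed estimate.

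\textbf{Main obstacle.} The delicate point is the bookkeeping of the cross terms $-2(p-1)\beta\psi^{-2\beta-1}\psi'\zeta\zeta_r$ and the mixed curvature/cut-off terms near $r=\delta$, and making sure the sign condition that survives in the limit $\varepsilon\to 0$ is exactly $\beta > \sqrt{(N-1)/(p-1)}$ rather than something weaker. One must also check the behaviour as $r\to 0$: since $\psi(r)\sim r$ near the pole and $\eta\sim\psi^{-\beta}$, the integrand $|u_r|^p\psi^{-2\beta}(\psi')^2\psi^{N-1}\sim |u_r|^p r^{N-1-2\beta}$ need not be integrable a priori, so the truncation on $[\varepsilon,\delta]$ is essential and one must verify that the boundary terms produced at $r=\varepsilon$ by the integration by parts in the proof of Lemma~\ref{08} either vanish or have a favourable sign as $\varepsilon\to 0$ — using $u_r<0$ and monotone convergence — so that the final inequality is obtained for the full interval $(0,\delta)$. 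This limiting argument, rather than any single computation, is where the care is needed.
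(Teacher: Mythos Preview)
Your overall strategy --- feed a power-type test function $\eta\sim\psi^{-\beta}$ into the nonlinearity-free stability inequality of Lemma~\ref{08} --- is exactly the paper's approach (the paper writes the test as $\psi\cdot\eta_\epsilon$ with $\eta_\epsilon\sim\psi^{-\alpha}$, which amounts to your $\beta=\alpha-1$). However, the core step is backwards. Lemma~\ref{08} reads
\[
\int_{\mathcal{B}_1}|u_r|^p\Big[(p-1)|\eta_r|^2+\partial_r\Big(\tfrac{(N-1)\psi'}{\psi}\Big)\eta^2\Big]\,\mathrm{d}v_g\ \ge\ 0,
\]
and with $\eta=\psi^{-\beta}$ the two leading pieces are indeed $(p-1)\beta^2(\psi')^2\psi^{-2\beta-2}$ (positive) and $-(N-1)(\psi')^2\psi^{-2\beta-2}$ (negative). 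But if the positive term dominates the negative one, as you require, the integrand is nonnegative and the inequality is trivially satisfied --- no information is gained. What one actually needs is that the \emph{negative} term wins, i.e.\ $(p-1)\beta^2<N-1$: then the leading combination
\[
\big[(p-1)\beta^2-(N-1)\big]\int_0^\delta|u_r|^p(\psi')^2\psi^{N-3-2\beta}\,\mathrm{d}r
\]
is negative, and the stability inequality forces it to be controlled in magnitude by the lower-order curvature and cut-off terms, which is precisely the desired bound on $\int_0^\delta|u_r|^p\psi^{N-1-2\alpha}\,\mathrm{d}r$. Your own final conclusion $\alpha<1+\sqrt{(N-1)/(p-1)}$ is in fact inconsistent with your intermediate claim $\beta>\sqrt{(N-1)/(p-1)}$ (since $\beta=\alpha-1$), which confirms the sign error.

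A second, smaller gap: the closure you propose --- bounding $\int_{\mathcal{B}_1\setminus\mathcal{B}_\delta}|u_r|^p\,\mathrm{d}v_g$ by $\|u\|_{L^p}^p$ via a ``Caccioppoli-type'' argument --- is not justified; testing the equation against $u\phi^p$ would reintroduce $f(u)$ on the right, and you have no control on that. The paper sidesteps this by taking a test function that vanishes for $r\ge\delta$, so the residual terms live on $(0,\delta)$; these are absorbed via an elementary inequality into half of the main term plus $C\int_0^\delta|u_r|^p\psi^{(N-1)p/(p-1)}\,\mathrm{d}r$, and the latter is handled by a pointwise bound obtained from the radial ODE \eqref{05} and the mean value theorem, giving $|u_r(r)|\psi(r)^{(N-1)/(p-1)}\le C\,u(\delta)\le C\|u\|_{L^p(\mathcal{B}_1)}$ for $r\in(0,\delta)$.
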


\begin{proof}
Using the semi-stability condition of $u$ and applying Lemma \ref{08} with $\psi\eta$ as test function we obtain
\begin{equation}\label{31}
(N-1)\int_{\mathcal{B}_1}\left[ -\psi^{\prime\prime}\psi+\left(\psi^\prime\right)^2 \right]|u_r|^p\eta^2\,\mathrm{d} v_g\leq (p-1)\int_{\Omega}|u_r|^p|(\psi\eta)_r|^2\,\mathrm{d} v_g
\end{equation}
Now, take $\alpha$ satisfying $1\leq\alpha<1+\sqrt{(N-1)/(p-1)}$, $\epsilon\in (0,1)$ sufficiently small and
\[
\eta_\epsilon(r) = \left\{
\begin{alignedat}{3}
\psi^{-\alpha}(\epsilon)-\psi^{-\alpha}(\delta) & \text{\quad for \quad} & 0\leq r \leq \epsilon \\
\psi^{-\alpha}(r)-\psi^{-\alpha}(\delta) & \text{\quad for \quad} & \epsilon<r\leq \delta \\
0 & \text{\quad for \quad} & \delta <r\leq 1
\end{alignedat}
\right.
\]
a Lipschitz function which vanishes on $\partial\mathcal{B}_1$. Choosing $\eta=\eta_\epsilon$ in the inequality \eqref{31} we have
\[
\begin{alignedat}{2}
&(N-1)\left(\int_{0}^{\epsilon}\left[ -\psi^{\prime\prime}\psi+\left(\psi^\prime\right)^2 \right]\eta_{\epsilon}^{2}|u_r|^p\psi^{N-1}\,\mathrm{d} r + \int_{\epsilon}^{\delta}\left[ -\psi^{\prime\prime}\psi+\left(\psi^\prime\right)^2 \right]\eta_{\epsilon}^{2}|u_r|^p\psi^{N-1}\,\mathrm{d} r\right)\\
&\leq (p-1)\left(\int_{\epsilon}^{\delta}\left[(1-\alpha)\psi^{-\alpha}-\psi^{-\alpha}(\delta)\right]^2|u_r|^p\left(\psi^\prime \right)^2\psi^{N-1}\,\mathrm{d}r+\int_{0}^{\epsilon}\eta_{\epsilon}^{2}|u_r|^p\left( \psi^\prime \right)^2\psi^{N-1}\,\mathrm{d} r\right),
\end{alignedat}
\]
which can be written as
\[
\begin{alignedat}{2}
&(N-p)\int_{0}^{\epsilon}\eta_{\epsilon}^{2}|u_r|^p(\psi^\prime)^2\psi^{N-1}\,\mathrm{d}r+(N-1)\int_{\epsilon}^{\delta}(\psi^\prime)^2\eta_{\epsilon}^{2}|u_r|^p\psi^{N-1}\,\mathrm{d}r\leq\\
&(p-1)\int_{\epsilon}^{\delta}\left[(1-\alpha)\psi^{-\alpha}-\psi^{-\alpha}(\delta)\right]^2|u_r|^p\left(\psi^\prime \right)^2\psi^{N-1}\,\mathrm{d}r+(N-1)\int_{0}^{\delta}\psi^{\prime\prime}\psi|u_r|^p\eta_{\epsilon}^{2}\psi^{N-1}\,\mathrm{d}r.
\end{alignedat}
\]
Since $(N-p)\eta_{\epsilon}^{2}|u_r|^p(\psi^\prime)^2\psi^{N-1}\,\mathrm{d}r\geq 0$ we obtain
\[
\begin{alignedat}{2}
(N-1)\int_{\epsilon}^{\delta}|u_r|^p\left( \psi^\prime \right)^2\eta_{\epsilon}^{2}\psi^{N-1}\mathrm{d}r\, &\leq\, (p-1)\int_{\epsilon}^{\delta}|u_r|^p\left( (1-\alpha)\psi^{-\alpha}-\psi^{-\alpha} \right)^2\left( \psi^\prime \right)^2\psi^{N-1}\mathrm{d}r\\
&+(N-1)\int_{0}^{\delta}\psi^{\prime\prime}\psi|u_r|^p\eta_{\epsilon}^{2}\psi^{N-1}\mathrm{d}r
\end{alignedat}
\]
Throughout the proof, $\tilde{C}_{n,p,\alpha}$ (respectively $\tilde{C}_{n,p,\alpha,\psi}$) denote different positive constants depending only on $n$, $p$ and $\alpha$ (respectively on $n$, $p$, $\alpha$, $\psi$). Rewritten the above equation follows
\[
\begin{alignedat}{2}
&\int_{\epsilon}^{\delta}\left( \psi^\prime \right)^2|u_r|^p\psi^{-2\alpha}\psi^{N-1}\mathrm{d}r \leq \tilde{C}_{n,p,\alpha}\left\{\int_{\epsilon}^{\delta}\left( \psi^\prime \right)^2|u_r|^p\psi^{-2\alpha}(\delta)\psi^{N-1}\mathrm{d}r\right.\\
&\left.+\int_{0}^{\delta}\psi^{N-1}|u_r|^p|\psi^{\prime\prime}|\psi\left( \psi^{-2\alpha}+\psi^{-2\alpha}(\delta) \right)\mathrm{d}r+\int_{\epsilon}^{\delta}\left( \psi^\prime \right)^2|u|^p\psi^{-\alpha}\psi^{-\alpha}(\delta)\psi^{N-1}\mathrm{d}r\right\}.
\end{alignedat}
\]
Observe that, by assumption, $\displaystyle\inf_{(0,\delta)}\psi^\prime$ and $\displaystyle\sup_{(0,\delta)}\psi^\prime$ are positive. Now, we can rearrange the terms in the integrals to obtain
\[
\begin{alignedat}{2}
\int_{\epsilon}^{\delta}|u_r|^p\psi^{-2\alpha}\psi^{N-1}\mathrm{d}r &\leq \tilde{C}_{N,p,\alpha}\int_{0}^{\delta}\psi^{N-1}|u_r|^p|\psi^{\prime\prime}|\psi\psi^{-2\alpha}\left( 1+\frac{\psi^{2\alpha}}{\psi^{2\alpha}(\delta)} \right)\,\mathrm{d}r\\
&+\tilde{C}_{N,p,\alpha}\int_{0}^{\delta}\left( \psi^\prime \right)^2|u|^p\psi^{-\alpha}\psi^{-\alpha}(\delta)\psi^{N-1}\left( 1+\frac{\psi^\alpha}{\psi^\alpha(\delta)} \right)\,\mathrm{d}r \\
&\leq\tilde{C}_{N,p,\alpha,\psi}\int_{0}^{\delta}\psi^{N-1}|u_r|^p\psi^{-\alpha}\left\{ 1+\psi^{1-\alpha} \right\}
\end{alignedat}
\]
Taking $\epsilon\rightarrow 0$, follows that
\begin{equation}\label{32}
\int_{0}^{\delta}|u_r|^p\psi^{-2\alpha}\psi^{N-1}\mathrm{d}r \leq \tilde{C}_{N,p,\alpha,\psi}\int_{0}^{\delta}\psi^{N-1}|u_r|^p\psi^{-\alpha}\left\{  1+\psi^{1-\alpha} \right\}\,\mathrm{d}r.
\end{equation}
If we define
\[
\zeta(t)=\frac{\tilde{C}_{N,p,\alpha,\psi}t^{-\alpha}(1+t^{1-\alpha})-\frac{t^{-2\alpha}}{2}}{t^{\frac{N-1}{p-1}}},
\]
using that $1\leq p \leq N$ and $\alpha$ satisfying $1\leq\alpha<1+\sqrt{(N-1)/(p-1)}$, we can check that $\lim_{t\rightarrow +\infty}\zeta(t)=0$ and $\lim_{t\rightarrow 0^+}\zeta(t)=-\infty$. Thus, by a compactness argument, $\zeta(t)$ is bounded from above. This implies that there exists $C_{N,p,\alpha,\psi}>0$ such that
\[
\tilde{C}_{N,p,\alpha,\psi}t^{-\alpha}(1+t^{1-\alpha})\leq\frac{t^{-2\alpha}}{2}+C_{N,p,\alpha,\psi}t^\frac{N-1}{p-1}\quad \forall t>0
\]
and \eqref{32} leads to
\begin{equation}\label{35}
\int_{0}^{\delta}|u_r|^p\psi^{-2\alpha}\psi^{N-1}\mathrm{d}r \leq C_{N,p,\alpha,\psi}\int_{0}^{\delta}|u_r|^p\psi^{(N-1)p/(p-1)}\,\mathrm{d}r.
\end{equation}
On the other hand, since $u$ is radially decreasing follows that
\begin{equation}\label{33}
u^{p}(\delta)\leq C_{N,\psi}\int_{0}^{\delta}u^{p}\psi^{N-1}\,\mathrm{d}r\leq C_{N,\psi}\Vert u \Vert_{L^p(\mathcal{B}_1)}^p
\end{equation}
and using Mean value theorem for some $\tilde{\delta}\in(\delta,2\delta)$ it holds
\begin{equation}\label{34}
-u_r(\tilde{\delta})=\frac{u(\delta)-u(2\delta)}{\delta}\leq \frac{u(\delta)}{\delta}
\end{equation}
Thus, integrating \eqref{05} from $r\in(0,\delta)$ to $\tilde{\delta}$ and using \eqref{34} we obtain
\[
\begin{alignedat}{2}
-\vert u_r(r) \vert^{p-2}u_r(r)\psi^{N-1}(r)&= -\vert u_r(\tilde{\delta}) \vert^{p-2}u_r(\tilde{\delta})\psi^{N-1}(\tilde{\delta})-\int_{s}^{\tilde{\delta}}f(u)\psi^{N-1}\leq\frac{u^{p-1}(\delta)}{\delta^{p-1}}\psi^{N-1}(\tilde{\delta}),
\end{alignedat}
\]
which together \eqref{33} implies
\[
\vert u_r \vert^p\psi^{(N-1)p/(p-1)}\leq \frac{u^p(\delta)}{\delta^p}\psi^{p(N-1)/(p-1)}(\tilde{\delta}) \leq C_{N,\psi}\Vert u \Vert_{L^p(\mathcal{B}_1)}^p.
\]
Integrating from $0$ to $\delta$ to obtain
\[
\int_{0}^{\delta}\vert u_r \vert^p\psi^{(N-1)p/(p-1)}\leq C_{N,\psi}\Vert u \Vert_{L^p(\mathcal{B}_1)}^p
\]
and going back to \eqref{35} we conclude that 
\[
\int_{0}^{\delta}|u_r|^p\psi^{N-1-2\alpha}\mathrm{d}r \leq C_{N,p,\alpha,\psi}\Vert u \Vert_{L^p(\mathcal{B}_1)}^p,
\]
which completes the proof.
\end{proof}

\section{Proof of Main Theorems}\label{25}

\begin{proof}[Proof of Theorem \ref{20}]
Let $\delta\in(0,1/2)$ as in Proposition \ref{23}. Since $u$ is radially symmetric and positive we can check that
\begin{equation}\label{38}
u(\delta)\leq \int_{0}^{\delta}u\psi^{N-1}\mathrm{d}r\leq C_{N,\psi}\Vert u \Vert_{L^1(\mathcal{B}_1)}
\end{equation}
Using H\"older inequality we can estimate
\begin{equation}\label{36}
\begin{alignedat}{2}
\vert u(t) \vert&=\left\vert u(\delta)-\int_{t}^{\delta}u_r\psi^{(N-1-2\alpha)/p}\psi^{(-N+1+2\alpha)/p}\mathrm{d}r \right\vert\\
 &\leq C_{N,\psi}\Vert u \Vert_{L^1(\mathcal{B}_1)}+\left( \int_{0}^{\delta}\vert u_{r}\vert^p\psi^{N-1-2\alpha} \right)^{\frac{1}{p}}\left( \int_{t}^{\delta}\psi^{(-N+1+2\alpha)/(p-1)} \right)^{\frac{p-1}{p}}\\
 &\leq C_{N,\psi}\Vert u \Vert_{L^1(\mathcal{B}_1)}+C_{N,p,\alpha,\psi}\Vert u \Vert_{L^p(\mathcal{B}_1)} \left(\int_{t}^{\delta}\psi^{(-N+1+2\alpha)/(p-1)} \right)^{\frac{p-1}{p}}
\end{alignedat}
\end{equation}
(i) In order to prove $L^\infty$ estimate, observe that, by monotonicity, for all $\delta\leq t<1$ we have
\[
u^p(t)\leq u^p(\delta)\leq C_{N,\psi}\Vert u \Vert_{L^p(\mathcal{B}_1)}^p.
\]
Taking $t=0$ in \eqref{36}, we can analyze the integral and check that
\[
\int_{0}^{\delta}\psi^{(2\alpha-N+1)/(p-1)} < +\infty
\]
when $(2\alpha-N+1)/(p-1)>-1$, that is, $\alpha>(N-p)/2$. Thus, for all $0<t<\delta$ we have
\[
\vert u(t) \vert \leq C_{N,\psi}\Vert u \Vert_{L^1(\mathcal{B}_1)}+C_{N,p,\alpha,\psi}\Vert u \Vert_{L^p(\mathcal{B}_1)},
\]
whenever $\max\left\{ (N-p)/2,1 \right\}<\alpha<1+\sqrt{(N-1)/(p-1)}$. This occurs if, and only if, $N<p+4p/(p-1)$. Therefore, the desired $L^\infty$ estimate \eqref{37} holds true.\\
(ii) On the other hand, since $u$ is decreasing, using \eqref{38} we have
\begin{equation}\label{39}
\left(\int_{\delta}^{1}\vert u \vert^q\psi^{N-1}\mathrm{d}t\right)^{\frac{1}{q}}\leq C_{N,\psi,q} u(\delta)\leq C_{N,\psi,q}\Vert u \Vert_{L^1(\mathcal{B}_1)}.
\end{equation}
Taking $t\in(0,\delta)$ and using \eqref{36} we have
\[
\int_{t}^{\delta}\vert u \vert^q\psi^{N-1}\mathrm{d}t \leq C_{N,\psi,q} \Vert u \Vert_{L^p(\mathcal{B}_1)}^{q}\int_{0}^{\delta}\left[ 1 + \left(\int_{t}^{\delta}\psi^{(-N+1+2\alpha)/(p-1)} \right)^{\frac{p-1}{p}}  \right]^q\psi^{N-1}\mathrm{d}t.
\]
Therefore, if $q<Np/(N-p-2-2\sqrt{(N-1)/(p-1)})$, choosing suitable $\alpha$ such that
\[
\left(\int_{0}^{\delta}\vert u \vert^q\psi^{N-1}\mathrm{d}t\right)^{\frac{1}{q}} \leq C_{N,\psi,q} \Vert u \Vert_{L^p(\mathcal{B}_1)}.
\]
Taking this last inequality combined with \eqref{39} we obtain the desired $L^q$ estimate.\\
Now, we are looking for $W^{1,q}$ estimate. We use similar idea as can be found in the proof of Theorem 1.2 in \cite{CASSAN2014}. For this, observe that every function $u\in W_{r}^{1,p}(\mathcal{B}_1)$ also belongs to the Sobolev space $W^{1,p}(\delta,1)$. Thus, we have
\begin{equation}\label{41}
\int_{\delta}^{1}|u_r|^q\psi^{N-1}\mathrm{d}r\leq C_{N,q,\psi}\int_{\delta}^{1}|u_r|^q\leq C_{N,q,\psi}u^q(\delta)\leq C_{N,q,\psi}\Vert u \Vert_{L^q(\mathcal{B}_1)}^{q}.
\end{equation}
Now, using equation \eqref{05} we have
\[
u_{rr}\leq -\frac{(N-1)\psi^\prime}{\psi} u_r \quad \text{ in } (0,1).
\]
Now, let $\tilde{\delta}\in(\delta,2\delta)$ such that \eqref{34} holds. Integrating the last inequality,
\[
\int_{t}^{\tilde{\delta}}u_{rr}\,\mathrm{d}r \leq -(N-1)\int_{t}^{\tilde{\delta}}\frac{\psi^\prime}{\psi}u_r\,\mathrm{d}r
\]
and using \eqref{38},
\[
\begin{alignedat}{2}
-\frac{u_r(t)}{N-1} &\leq -\frac{u_r(\tilde{\delta})}{N-1}-\int_{t}^{\tilde{\delta}}\frac{|\psi^\prime|}{\psi}u_r\,\mathrm{d}r\\
&\leq C_{N,\psi}\Vert u \Vert_{L^1(\mathcal{B}_1)}	+\int_{t}^{2\delta}\frac{\psi^\prime}{\psi}\psi^{(-N+1+2\alpha)/p}\psi^{(N-1-2\alpha)/p}.
\end{alignedat}
\]
Using H\"older inequality and observing that we can use Proposition \ref{23} because we can take our $\delta$ sufficiently small, follows
\[
\begin{alignedat}{2}
-u_r(t) &\leq C_{N,p,\psi}\Vert u \Vert_{L^p(\mathcal{B}_1)}\left( 1+ \left(\int_{t}^{2\delta}\left(\frac{\psi^\prime}{\psi}\right)^{p^\prime}\psi^{p^\prime(-N+1+2\alpha)/p}\right)^{\frac{1}{p^\prime}}\right)
\end{alignedat}
\]
for all $\alpha\in [1,1+\sqrt{(N-1)/(p-1)}]$. Thus, for $s\in (0,\delta)$,
\[
\begin{alignedat}{2}
\int_{s}^{\delta}|u_r|^q\psi^{N-1}\,\mathrm{d}r & \leq C_{N,p,\psi}\Vert u \Vert_{L^p(\mathcal{B}_1)}^{q}\left( 1+ \left(\int_{t}^{2\delta}\left(\frac{\psi^\prime}{\psi}\right)^{p^\prime}\psi^{p^\prime(-N+1+2\alpha)/p}\right)^{\frac{1}{p^\prime}}\right)^q
\end{alignedat}
\]
Now, observe that
\[
\left( \left( \int_{t}^{2\delta} \left(\psi^\prime\right)^\frac{1}{p^\prime}\psi^{\frac{1}{p-1}(-N+1+2\alpha-p)}\right)^{\frac{1}{p^\prime}}\right)^q<C_{N,p,q,\psi}<+\infty
\]
since $q<Np/(N-2-2\sqrt{(N-1)/(p-1)})$. Thus
\begin{equation}\label{40}
\int_{s}^{\delta}|u_r|^q\psi^{N-1}\,\mathrm{d}r \leq C_{N,p,q,\psi}\Vert u \Vert_{L^p(\mathcal{B}_1)}^{q}.
\end{equation}
Using equations \eqref{41} and \eqref{40} we finish the proof.
\end{proof}

\begin{proof}[Proof of Theorem \ref{03}]
Let $\lambda\in (0,\lambda^*)$. There exists $\rho_\lambda\in (1/2,1)$ such that mean value property holds, that is,
\[
\frac{\partial u_\lambda}{\partial r}(\rho_\lambda)=\frac{u_\lambda(1/2)-u_\lambda(1)}{1/2}.
\]
Since $u_\lambda$ is decreasing, (see proof of Theorem \ref{26}), we have
\[
\left[ \frac{\partial u_\lambda}{\partial r}(\rho_\lambda) \right]^{p-1}=\left[ 2u_\lambda(1/2) \right]^{p-1}\leq C_{N,p,\psi}\Vert u_{\lambda}^{p-1} \Vert_{L^1(B_{1/2})}.
\]
Thus
\begin{equation}\label{16}
\Vert \psi^{N-1}|\frac{\partial u_\lambda}{\partial r}|^{p-1} \Vert_{L^\infty(B_{1/2})}\leq C_{N,p,\psi}\Vert u_{\lambda}^{p-1} \Vert_{L^1(B_{1/2})}.
\end{equation}
follows by monotonicity. By using $\phi(r)=\min\left\{1,(2-4r)^+\right\}$ as test function and \eqref{16} we obtain
\begin{equation}\label{19}
\Vert \lambda h(u_\lambda) \Vert_{L^1(B_{1/4})}\leq C_{N,p,\psi}\int_{1/4}^{1/2}\psi^{N-1}\vert\frac{\partial u_\lambda}{\partial r}\vert^{p-1}\,\mathrm{d} r\leq C_{N,p,\psi}\Vert u_{\lambda}^{p-1} \Vert_{L^1(B_{1/2})}.
\end{equation}
Using the assumption \eqref{21}, given $\delta>0$ we have for any $\lambda\in (\lambda^*/2,\lambda^*)$ and for all $t>0$, 
\[
\lambda h(t)\geq \frac{1}{\delta}t^{p-1}-C_\delta,
\]
where $C_\delta$ does not depends on $\lambda$. With this
\begin{equation}\label{17}
\Vert u_{\lambda}^{p-1} \Vert_{L^1(B_{1/4})}\leq C_{N,p,\psi}\delta\Vert u_{\lambda}^{p-1} \Vert_{L^1(B_{1/2})}+C_\delta.
\end{equation}
Since $u_\lambda$ is decreasing follows that
\begin{equation}\label{18}
\Vert u_{\lambda}^{p-1} \Vert_{L^1(B_{1/2}\setminus \overline{B}_{1/4})}\leq C_{N,p,\psi}u_{\lambda}^{p-1}(1/4)\leq C_{N,p,\psi}\Vert u_{\lambda}^{p-1} \Vert_{B_{1/4}}.
\end{equation}
Now, take $\delta$ sufficiently small and combine \eqref{17} with \eqref{18} to obtain
\[
\Vert u_{\lambda}^{p-1} \Vert_{L^1(B_{1/4})}\leq C,
\]
where $C$ is a constant independent of $\lambda$. Repeating the argument in \eqref{18} we are able to obtain an estimate uniform in $\lambda$ for $\Vert u_{\lambda}^{p-1} \Vert_{L^1(B_1)}$. Using this in \eqref{19} we obtain a estimate for $\Vert h(u_\lambda) \Vert_{L^1(B_{1/4})}$. Again by monotonicity we can apply the same argument used above to control $\Vert h(u_\lambda) \Vert_{L^1(B_1)}$ uniformly in $\lambda$. Thus
\begin{equation}\label{10}
\Vert u_{\lambda}^{p-1} \Vert_{L^1(B_1)}+\Vert h(u_\lambda) \Vert_{L^1(B_1)}\leq C,
\end{equation}
where $C$ is a constant independent of $\lambda$. Observe that every radial function $u\in W^{1,p}(\mathcal{B}_1)$ also belongs to the Sobolev space $W^{1,p}(\delta,1)$ in one dimension for a given $\delta \in (0,1).$ Using the Sobolev embedding in one dimension, $u$ becomes a continuous function of $r=\mathrm{dist}(x,\mathcal{O})\in [\delta,1]$ and
\[
|u(1)|\leq C_{N,p}\Vert u \Vert_{W^{1,p}(\mathcal{B}_1)}
\]
In view of this estimate, we can assume that $u>0=u(1)\text{ in }\mathcal{B}_1$. Take $\alpha$ satisfying $1\leq\alpha<1+\sqrt{(N-1)/(p-1)}$ and using Proposition \ref{23},
\[
\begin{alignedat}{3}
\int_{\mathcal{B}_1}&|u_r|^p\psi^{-2\alpha}\,\mathrm{d} v_g \leq C_{N,p,\psi}\int_{\mathcal{B}_1} |u_r|^p \,\mathrm{d} x = C_{N,p,\psi}\int_{\mathcal{B}_{r_0}} |u_r|^p \,\mathrm{d} x + C_{N,p,\psi}\int_{\mathcal{B}_1\setminus\overline{\mathcal{B}_{r_0}}} |u_r|^p \,\mathrm{d} x.&
\end{alignedat}
\]
Now, choose $r_0$ such that $2C_{N,p,\psi}\leq \psi^{-2\alpha}$ in $r\in (0,r_0)$ to obtain
\[
C_{N,p,\psi}\int_{\mathcal{B}_{r_0}} |u_r|^p \,\mathrm{d} x\leq\frac{1}{2}\int_{\mathcal{B}_1} \psi^{-2\alpha}|u_r|^p \,\mathrm{d} x,
\]
which implies
\[
C_\psi\int_{\mathcal{B}_1}|u_r|^p\,\mathrm{d} x\leq \int_{\mathcal{B}_1}\psi^{-2\alpha}|u_r|^p\,\mathrm{d} x \leq C_{N,p,\psi}\int_{\mathcal{B}_1\setminus\overline{\mathcal{B}_{r_0}}}|u_r|^p\,\mathrm{d} x.
\]
Since $u$ is decreasing we have that
\[
u(r_0)^{p-1}\leq C_{N,p}\Vert u^{p-1} \Vert_{L^1(\mathcal{B}_{r_0})}.
\]
Thus,
\[
\begin{alignedat}{2}
\int_{\mathcal{B}_1\setminus\overline{\mathcal{B}_{r_0}}}|u_r|^p\,\mathrm{d} x & = C_{N,\psi}\int_{r_0}^{1}|u_r|^p\psi^{N-1}\,\mathrm{d} r\\
& \leq C_{N,\psi}\Vert \psi^{N-1}|u_r|^{p-1} \Vert_{L^\infty(\mathcal{B}_1)}\int_{r_0}^{1}-u_r\,\mathrm{d} r\\
& \leq C_{N,p,\psi}\Vert h(u) \Vert_{L^1(\mathcal{B}_1)}\Vert u^{p-1} \Vert_{L^1(\mathcal{B}_1)}^{\frac{1}{p-1}}.
\end{alignedat}
\]
We can conclude that
\[
\begin{alignedat}{2}
\int_{\mathcal{B}_1}|u_r|^p\,\mathrm{d} x & \leq C_{N,p,\psi}\int_{\mathcal{B}_1\setminus\overline{\mathcal{B}_{r_0}}}|u_r|^p\,\mathrm{d} x\\
 & \leq C_{N,p,\psi}\Vert h(u) \Vert_{L^1(\mathcal{B}_1)}\Vert u^{p-1} \Vert_{L^1(\mathcal{B}_1)}^{\frac{1}{p-1}}.
\end{alignedat}
\]
By \eqref{10} we deduce a bound for $\Vert u_\lambda \Vert_{W^{1,p}(B_1)}.$ By using the compactness and since $u_\lambda\rightarrow u^*$ as $\lambda\rightarrow\lambda^*$ follows that $u^*\in W_{0}^{1,p}(B_1)$. We can pass to the limit and conclude that $u^*$ is a weak solution of \eqref{02}. It is clear that $u^*$ is radially symmetric and decreasing. By Fatou's Lemma we obtain that $u^*$ is semi-stable. Finally, we can pass to the limit and the regularity statement follows as a consequence of Theorem \ref{20}.
\end{proof}

\begin{corollary}
	The extremal solution $u^*$ has the same regularity stated in Theorem \ref{20}.
\end{corollary}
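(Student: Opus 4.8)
The plan is to reduce the statement entirely to Theorem~\ref{20}. Inspecting the proof of Theorem~\ref{03}, one sees that the dimensional restriction $N<p+4p/(p-1)$ is used only at the very last step, where part~$(a)$ of Theorem~\ref{20} is invoked; everything preceding it is valid for every $N$ with $p\leq N$. So the first part of that argument carries over verbatim: I would first recover the uniform bound \eqref{10} on $\Vert u_{\lambda}^{p-1}\Vert_{L^{1}(\mathcal{B}_{1})}+\Vert h(u_{\lambda})\Vert_{L^{1}(\mathcal{B}_{1})}$ (whose derivation uses only the monotonicity of the minimal solutions, the superlinearity \eqref{21}, and the test function $\phi(r)=\min\{1,(2-4r)^{+}\}$), then deduce from it, via Proposition~\ref{23} applied with some $\alpha\in[1,1+\sqrt{(N-1)/(p-1)})$ and the radial $L^{\infty}$ control of $\psi^{N-1}\vert\partial_{r}u_{\lambda}\vert^{p-1}$, a bound for $\Vert u_{\lambda}\Vert_{W^{1,p}(\mathcal{B}_{1})}$ uniform in $\lambda$, and finally pass to the limit $\lambda\nearrow\lambda^{*}$.

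This limit passage produces $u^{*}\in W_{0}^{1,p}(\mathcal{B}_{1})$, a weak solution of $(\mathcal{P}_{\lambda^{*}})$, radially symmetric, positive in $\mathcal{B}_{1}$, vanishing on $\partial\mathcal{B}_{1}$, decreasing in $r$, and semi-stable (the last property from Fatou's lemma applied to \eqref{04}). Restricted to $\mathcal{B}_{1}\setminus\{\mathcal{O}\}$, $u^{*}$ is then a semi-stable, radially symmetric, decreasing solution of \eqref{01} with nonlinearity $f(s)=\lambda^{*}h(s)$, which is locally Lipschitz since $h\in C^{1}$, and $u^{*}\in L^{p}(\mathcal{B}_{1})$ because $u^{*}\in W_{0}^{1,p}(\mathcal{B}_{1})$. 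Hence Theorem~\ref{20} applies directly to $u^{*}$: if $N<p+4p/(p-1)$ it gives $u^{*}\in L^{\infty}(\mathcal{B}_{1})$ with $\Vert u^{*}\Vert_{L^{\infty}(\mathcal{B}_{1})}\leq C_{N,p,\alpha,\psi}\Vert u^{*}\Vert_{L^{p}(\mathcal{B}_{1})}$ (recovering Theorem~\ref{03}), while if $N\geq p+4p/(p-1)$ it gives $u^{*}\in L^{q}(\mathcal{B}_{1})$ for every $q<q_{0}$ and $u^{*}\in W^{1,q}(\mathcal{B}_{1})$ for every $q<q_{1}$, with the stated estimates in terms of $\Vert u^{*}\Vert_{L^{p}(\mathcal{B}_{1})}$. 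This is exactly the regularity asserted in Theorem~\ref{20}.

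The hard part, as already in the proof of Theorem~\ref{03}, will be the limit passage $\lambda\nearrow\lambda^{*}$ itself: from the uniform $W^{1,p}$ bound one only gets weak convergence of a subsequence, and to conclude that $u^{*}$ is a genuine weak solution one must upgrade this to convergence of the fluxes $\vert\nabla u_{\lambda}\vert^{p-2}\nabla u_{\lambda}$, which relies on the monotone-operator structure of $-\Delta_{p}$ (a Minty-type device), while simultaneously ensuring that the a priori only $L^{1}$ right-hand sides $\lambda h(u_{\lambda})$ converge with no loss of mass, using \eqref{10} together with the monotone convergence $u_{\lambda}\uparrow u^{*}$. By contrast, the transfer of semi-stability is a one-line Fatou argument, and the transfer of radial symmetry and of monotonicity in $r$ is automatic; once $u^{*}$ is known to be an admissible semi-stable solution in $L^{p}(\mathcal{B}_{1})$, the conclusion is an immediate citation of Theorem~\ref{20}.
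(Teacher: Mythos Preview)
Your proposal is correct and follows essentially the same route as the paper. The paper's own proof is a single sentence---``use the above estimates and pass to the limit as $\lambda\to\lambda^{*}$''---because all the substantive work (the uniform $L^{1}$ bound \eqref{10}, the resulting uniform $W^{1,p}$ bound, the identification of $u^{*}$ as a semi-stable weak solution via Fatou) was already carried out in the proof of Theorem~\ref{03}; you have correctly observed that the dimensional restriction there enters only at the final invocation of Theorem~\ref{20}(a), so the preceding argument is dimension-free and feeds directly into both parts of Theorem~\ref{20}. Your added discussion of the Minty-type passage to the limit for the fluxes and the monotone-convergence handling of $\lambda h(u_{\lambda})$ spells out details the paper leaves implicit, but the strategy is identical.
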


\begin{proof}
The proof follows straightforward by using above estimates and passing to the limit as $\lambda\rightarrow \lambda^*$.
\end{proof}


\begin{thebibliography}{100}

\bibitem{ANTMUGPUC2007}
\textrm{P.~Antonini, D.~Mugnai, P.~Pucci},
\textit{Quasilinear elliptic inequalities on complete Riemannian manifolds.}
J. Math. Pures Appl. \textbf{87} (2007) 582--600.

\bibitem{BERFERGRI2014}
\textrm{E.~Berchio, A.~Ferrero, G.~Grillo,}
\textit{Stability and qualitative properties of radial solutions of the Lane-Emden-Fowler equation on Riemannian models,}
J. Math. Pure. Appl. \textbf{102} (2014) 1--35.

\bibitem{CABSAN2007}
\textrm{X.~Cabr\'{e}, M.~Sanch\'{o}n,}
\textit{Semi-stable and extremal solutions of reaction equations involving the p-Laplacian,}
Commun. Pure Appl. Anal. \textbf{6} (2007) 43--67.

\bibitem{CABCAPSAN2009}
\textrm{X.~Cabr\'{e}, A.~Capella, M.~Sanch\'{o}n},
\textit{Regularity of radial minimizers of reaction equations involving the p-Laplacian,}
Calc. Var. Partial Differential Equations \textbf{34} (2009) 475--494.

\bibitem{CAB2010}
\textrm{X. Cabr\'{e}},
\textit{Regularity of minimizers of semilinear elliptic problems up to dimension $4$,}
Comm. Pure Appl. Math. \textbf{63} (2010) 1362--1380.

\bibitem{CABSAN2013}
\textrm{X. Cabr\'{e}, M. Sanch\'{o}n},
\textit{Geometric-type Sobolev inequalities and applications to the regularity of minimizers,}
J. Funct. Anal. \textbf{264} (2013) 303--325.

\bibitem{CAL1971}
\textrm{A.~Callegari, E.~Reiss, H.~Keller},
\textit{Membrane buckling,}
Comm. Pure Appl. Math. \textbf{24} (1971) 499--527.

\bibitem{CASSAN2014}
\textrm{D.~Castorina, M.~Sanch\'{o}n,}
\textit{Regularity of stable solutions to semilinear elliptic equations on Riemannian models,}
Adv. Nonlinear Anal. \textbf{4} (2015) 295--309.

\bibitem{CASSAN15}
\textrm{D.~Castorina, M.~Sanch\'{o}n,}
\textit{Regularity of stable solutions of p-Laplace equations through geometric Sobolev type inequalities,}
J. Eur. Math. Soc. (JEMS) \textbf{17} (2015), 2949--2975. 

\bibitem{CAS2015}
\textrm{D. Castorina},
\textit{Regularity of the extremal solution for singular p-Laplace equations,}
Manuscripta Math. \textbf{146} (2015) 519--529.

\bibitem{CHA1985}
\textrm{S.~Chandrasekhar,}
\textit{An introduction to the study of stellar structure,}
Dover Publ. Inc. New York, 1985.

\bibitem{CRARAB1975}
\textrm{M.~Crandall, P.~Rabinowitz},
\textit{Some continuation and variational methods for positive solutions of nonlinear elliptic eigenvalue problems,}
Arch. Rational Mech. Anal. \textbf{58} (1975) 207--218.
	
\bibitem{DAV2008}
\textrm{J.~D\'avila},
\textit{Singular solutions of semi-linear elliptic problems,}
Handbook of differential equations: stationary partial differential equations. Vol. VI, Handb. Differ. Equ., Elsevier/North-Holland, Amsterdam, 2008, pp. 83--176.

\bibitem{DIAZ1985}
\textrm{J.~I.~D\'{\i}az}, 
\textit{Nonlinear partial differential equations and free boundaries. Vol. I. Elliptic equations}. Research Notes in Mathematics, 106. Pitman (Advanced Publishing Program), Boston, MA, 1985.


\bibitem{BEN}
\textrm{E.~DiBenedetto,}
\textit{$C^{1+\alpha}$ local regularity of weak solutions of degenerate elliptic equations,}
Nonlinear Anal. \textbf{7} (1983), 827--850. 

\bibitem{doO_Costa_2016}
\textrm{J.~M.~do~\'{O}, R.~da~Costa}, 
\textit{Symmetry properties for nonnegative solutions of non-uniformly elliptic equations in the hyperbolic space}. J. Math. Anal. Appl. \textbf{435} (2016), 1753--1771.


\bibitem{DUP2011}
\textrm{L.~Dupaigne},
\textit{Stable Solutions of Elliptic Partial Differential Equations,}
Chapman \& Hall/CRC, Boca Raton, 2011

\bibitem{espghoguo2009}
\textrm{P.~Esposito, N.~Ghoussoub, N.~Guo},
\textit{Mathematical analysis of partial differential equations modeling electrostatic MEMS,}
Courant Lecture Notes in Mathematics, 20. Courant Institute of Mathematical Sciences, New York; American Mathematical Society, Providence, RI, 2010.

\bibitem{FARMARVAL2013}
\textrm{A.~Farina, L.~Mari, E.~Valdinoci},
\textit{Splitting theorems, symmetry results and overdetermined problems for Riemannian manifolds,}
Comm. Partial Differential Equations \textbf{38} (2013) 1818--1862 .

\bibitem{GARPER1992}
\textrm{J. Garc\'{i}a-Azorero, I. Peral},
\textit{On an Emden- Fowler type equation,}
Nonlinear Anal. \textbf{18} (1992) 1085--1097.

\bibitem{GARPERPUE1994}
\textrm{J. Garc\'{i}a-Azorero, I. Peral, J. P. Puel},
\textit{Quasilinear problems with exponential growth in the reaction term,}
Nonlinear Anal. \textbf{22} (1994) 481--498.

\bibitem{GEL1959}
\textrm{I.~M.~Gelfand},
\textit{Some problems in the theory of quasilinear equations,}
Section 15, due G.~I.~Barenblatt, American Math. Soc. Transl. \textbf{29} (1963) 295--381; Russian original: Uspekhi Mat. Nauk. \textbf{14} (1959) 87--158.

\bibitem{GRO}
\textrm{J. Grosjean}:
\textit{p-Laplace operator and diameter of manifolds,}
Ann. Global Anal. Geom. \textbf{28} (2005), 257--270.

\bibitem{GNN}
\textrm{B. Gidas, W. M. Ni, L. Nirenberg}, 
\textit{Symmetry and related properties via the maximum principle}, 
Comm. Math. Phy. 68 (1979), 209--243.

\bibitem{HOLO}
\textrm{I. Holopainen}:
\textit{Asymptotic Dirichlet problem for the p-Laplacian on Cartan-Hadamard manifolds,}
Proc. Amer. Math. Soc. \textbf{130} (2002), 3393--3400. 

\bibitem{JOS1965}
\textrm{D.~Joseph}:
\textit{Non-linear heat generation and the stability of the temperature distribution in conducting solids.}
Int. J. Heat Mass Transfer \textbf{8} (1965) 281--288.

\bibitem{JOSCOH1967}
\textrm{D.~Joseph, D.~Cohen}:
\textit{Some positone problems suggested by nonlinear heat generation.}
J. Math. Mech. \textbf{16} (1967) 1361--1376.

\bibitem{JOSSPA1970}
\textrm{D.~Joseph, E.~Sparrow}:
\textit{Nonlinear diffusion induced by nonlinear sources.}
Quart. Appl. Math. \textbf{28} (1970) 327--342.

\bibitem{JOSLUN1972}
\textrm{D.~Joseph, T.~Lundgren}:
\textit{Quasilinear Dirichlet problem driven by positive sources.}
Arch. Rational Mech. Anal. \textbf{49} (1972) 241--269.

\bibitem{KAW}
\textrm{S. Kawai, N. Nakauchi}:
\textit{The first eigenvalue of the p-Laplacian on a compact Riemannian manifold.}
Nonlinear Anal. \textbf{55} (2003), 33--46.

\bibitem{KEEKEL1974}
\textrm{J.~Keener, H.~Keller},
\textit{Positive solutions of convex nonlinear eigenvalue problems.}
J. Differential Equations \textbf{16} (1974) 103--125.

\bibitem{KUR1989}
\textrm{T.~Kura,}:
\textit{The weak Supersolution-Subsolution Method for second order quasilinear elliptic equations.}
Hiroshima Math. J. \textbf{19} (1989), 1--36.

\bibitem{LIE1988}
\textrm{G.~M.~Lieberman},
\textit{Boundary regularity for solutions of degenerate elliptic equations,}
Nonlinear Anal. \textbf{12} (1988) 1203--1219.

\bibitem{LIND}
\textrm{P. Lindqvist},
\textit{On the equation $div(|\nabla u|^{p-2}\nabla u) +\lambda|u|^{p-2}u = 0$.}
Proc. Amer. Math. Soc. \textbf{109} (1990), 157--164

\bibitem{LUYEZH2011}
\textrm{X. Luo, D. Ye, F. Zhou},
\textit{Regularity of the extremal solution for some elliptic problems with singular nonlinearity and advection.}
J. Differential Equations \textbf{251} (2011) 2082--2099.

\bibitem{MAO2014}
\textrm{J.~Mao},
\textit{Eigenvalue inequality for the p-Laplacian on a Riemannian manifold and estimates for the heat kernel,}
J. Math. Pures Appl. \textbf{9} (2014) 372--393.

\bibitem{MIGPUE1978}
\textrm{F.~Mignot, J.-P.~Puel},
\textit{Sur une classe de probl\`{e}mes non lin\'{e}aires avec non lin\'{e}arit\'{e} positive, croissante, convexe,}
Publications de I'UER de Math. de Lille \textbf{1} (1978).

\bibitem{MON1999}
\textrm{M.~Montenegro},
\textit{Strong maximum principles for super-solutions of quasilinear elliptic equations,}
Nonlinear Anal. \textbf{37} (1999) 431--448.

\bibitem{MOR2015}
\textrm{F.~Morabito,}
\textit{Radial and non-radial solutions to an elliptic problem on annular domains in Riemannian manifolds with radial symmetry,}
J. Differential Equations \textbf{258} (2015) 1461--1493.

\bibitem{NED2000}
\textrm{G.~Nedev},
\textit{Regularity of the extremal solution of semilinear elliptic equations.}
C. R. Acad. Sci. Paris S\'{e}r.~I Math. \textbf{300} (2000) 997--1002. 

\bibitem{SAN2007}
\textrm{M. Sanch\'{o}n},
\textit{Boundedness of the extremal solution for some p-Laplacian problems,}
Nonlinear Anal. \textbf{67} (2007) 281--294.

\bibitem{TOL}
\textrm{P.~Tolksdorf},
\textit{Regularity for a more general class of quasilinear elliptic equations,}
J. Differential Equations \textbf{51} (1984), 126--150. 

\bibitem{WEI2014}
\textrm{L. Wei},
\textit{Boundedness of the extremal solution for some p-Laplacian problems,}
Math. Slovaca \textbf{64} (2014) 379--390.

\bibitem{ZHA}
\textrm{H. Zhang},
\textit{Lower bounds for the first eigenvalue of the p-Laplace operator on compact manifolds with nonnegative Ricci curvature,}
Adv. Geom. 7 (2007), no. 1, 145--155.

\end{thebibliography}
\end{document}